\newcommand{\inner}[2]{\left\langle{#1},{#2}\right\rangle}
\numberwithin{equation}{section}
\theoremstyle{definition}
\newtheorem{definition}{Definition}[section]
\newtheorem{ex}[definition]{Example}
\newtheorem{rem}[definition]{Remark}
\theoremstyle{plain}
\newtheorem{prop}[definition]{Proposition}
\newtheorem{lem}[definition]{Lemma}
\newtheorem{coro}[definition]{Corollary}
\newtheorem{teo}[definition]{Theorem}
\newfont{\bbb}{msbm10 scaled\magstephalf} 
\def\R{\mathbb R}
\def\R{\mbox{\bbb R}}
\def\A{\mathscr A}
\title{The axial curvature for corank 1 singular surfaces}
\author{R. Oset Sinha, K. Saji}
\date{\today}
\address{Departament de Matem\'atiques,
Universitat de Val\`encia, Campus de Burjassot, 46100 Burjassot,
Spain}
\email{raul.oset@uv.es}
\address{Department of Mathematics,
Kobe University, Rokko 1-1, Nada,
Kobe 657-8501, Japan}
\email{saji@math.kobe-u.ac.jp}
\thanks{Work of R. Oset Sinha partially supported by MICINN Grant PGC2018-094889-B-I00}
\thanks{Work of K. Saji supported by JSPS KAKENHI Grant Number JP26400087}
\subjclass[2000]{Primary 58K05; Secondary 57R45, 53A05} \keywords{corank 1 singular surface, curvature parabola, axial curvature}
\begin{document}
\begin{abstract}
For singular corank 1 surfaces in $\mathbb R^3$ we introduce a distinguished normal vector called the axial vector. Using this vector and the curvature parabola we define a new type of curvature called the axial curvature, which generalizes the singular curvature for frontal type singularities. We then study contact properties of the surface with respect to the plane orthogonal to the axial vector and show how they are related to the axial curvature. Finally, for certain fold type singularities, we relate the axial curvature with the Gaussian curvature of an appropriate blow up.
\end{abstract}

\maketitle

\section{Introduction}
Singularities of surfaces or singular surfaces in 3-space have been of interest for a very long time. However, in the last 15 years the study of the differential geometry of singular surfaces has seen a huge development due to the growing number of situations in which this type of surfaces appear. In fact, these object are not only cherished by singularity theorists but also by differential geometers. The introduction of singularity theory techniques has been crucial in the development of the area. Papers such as \cite{kokubuetal} or \cite{sajietalannals}, which introduce new types of curvature and study the behaviour of the Gaussian curvature near singular points for wave-fronts, have become seminal papers in the area.

Wave-fronts, or frontals in general, have a well-defined normal vector even at the singular points, so it is in a way easier to study geometrical properties for these kinds of singularities. For different types of singularities such as the cross-cap, which is the only type of singularity a stable map germ $f:(\mathbb R^2,0)\to (\mathbb R^3,0)$ can have, this is not the case. In \cite{MartinsBallesteros}, corank 1 singularities are studied in general (by corank we mean the corank of the differential of the local parametrisation of the surface). The authors define a curvature parabola in the normal plane similar to the curvature ellipse for regular surfaces in $\mathbb R^4$, which encodes all the second order geometry of the surface at the singular point. In the case the parabola is degenerate (when the singular point is not a cross-cap) they define the \emph{umbilic curvature} $\kappa_u$ as the projection of the parabola to a certain distinguished normal direction, which captures degenerate contact with spheres. This curvature generalizes the normal curvature for fronts (\cite{sajietalannals}, \cite{martinssaji1}). In \cite{teramoto} it is shown that the normal curvature is a kind of bounded principal curvature for front singularities, so the umbilic curvature can be seen as a principal curvature for corank 1 singularities.

The idea of obtaining the principal curvatures in a certain normal direction by projections comes from the curvature ellipse of surfaces in $\mathbb R^4$ (\cite{nunoromerobringas}). In fact, in \cite{garciasoto} the authors introduce the concept of lines of axial curvature as the lines of curvature corresponding to the principal curvatures in the normal direction corresponding to the axis of the ellipse. Inspired by these ideas we define in Section 3 the \emph{axial curvature} $\kappa_a$ as the minimum value of the projection of the curvature parabola on the \emph{axial vector} $v_a$, where $v_a$ is the axis of symmetry of the parabola when it is non-degenerate or the direction of the line which contains the parabola when it is degenerate but not a point. We prove that this curvature is intrinsic and give coordinate free expressions for it. In Section 4 we show that it generalizes the singular curvature for fronts (\cite{kokubuetal}). Considering the amount of applications that the singular curvature has in generalizing concepts and results of regular surfaces to frontals, this gives an idea of the potential of the axial curvature.

Section 5 is devoted to the study of the contact of a surface with the plane orthogonal to the axial vector by analyzing the height function in the direction of $v_a$. We characterize the type of contact by the axial curvature and give criteria to distinguish when a singular point is elliptic, hyperbolic or parabolic by looking at the curves of intersection of the surface with the plane orthogonal to $v_a$. Section 2 contains the preliminaries about corank 1 surfaces in $\mathbb R^3$ from \cite{MartinsBallesteros}. Finally, in Section 6, for certain fold type singularities (i.e. $j^{2}f(0)\sim_{\mathcal{A}^{2}}(x,y^{2},0)$) we relate the axial curvature to the Gaussian curvature of an appropriate blow up and we justify why we cannot obtain a good Koenderink type formula due to the appearance of a certain term. As a by-product we prove that this term is an obstruction to frontality.

\section{Preliminaries}\label{section-notation}

We state some definitions and results about corank 1 surfaces in $\mathbb R^3$ (see \cite{MartinsBallesteros} for details). Given a surface
$M\subset\mathbb{R}^{3}$ with a corank 1 singularity at $p\in M$, we shall assume it as the
image of a smooth map $f:\mathbb R^2\rightarrow\mathbb{R}^{3}$, such that $f(q)=p\in M$, where $q$ is a corank 1 singular point of $f$. Notice that we are taking $\tilde M=\mathbb R^2$ and $\varphi=id$ in the construction in \cite{MartinsBallesteros}.

The tangent line to $M$ at $p$ is the set $T_{p}M=\mbox{im}(df_{q})$, where
$df_{q}:T_{q}\mathbb R^2\rightarrow T_{p}\mathbb{R}^{3}$ and the normal plane
$N_{p}M$ satisfies $T_{p}\mathbb{R}^{3}=T_{p}M\oplus N_{p}M$.
The \emph{first fundamental form} $I:T_{q}\mathbb R^2\times T_{q}\mathbb R^2\rightarrow\mathbb{R}$
is given by
$$
I(X,Y)=\langle df_{q}(X),df_{q}(Y)\rangle,\ \forall\ X,Y\in T_{q}\mathbb R^2.
$$
With the parametrisation $f$ and if
$\{\partial_{u},\partial_{v}\}$ is a basis for $T_{q}\mathbb R^2$, the coefficients
of the first fundamental form are:
$$
\begin{array}{c}
E(q)=I(\partial_{u},\partial_{u})=\langle f_{u},f_{u}\rangle(q),\ F(q)=I(\partial_{u},\partial_{v})=\langle f_{u},f_{v}\rangle(q)\\
G(q)=I(\partial_{v},\partial_{v})=\langle f_{v},f_{v}\rangle(q),
\end{array}
$$
and taking $X=a\partial_{u}+b\partial_{v}\in T_{q}\mathbb R^2$,
$I(X,X)=a^{2}E(q)+2abF(q)+b^{2}G(q)$. This induces a pseudometric in $T_{q}\mathbb R^2$.
Let $\perp:T_{p}\mathbb{R}^{3}\rightarrow N_{p}M$, be the orthogonal projection onto
the normal plane. The \emph{second fundamental form} of $M$ at $p$,
$II:T_{q}\mathbb R^2\times T_{q}\mathbb R^2\rightarrow N_{p}M$, is the symmetric bilinear map
such that
$$\begin{array}{c}
II(\partial_{u},\partial_{u})=f_{uu}^{\perp}(q),\ II(\partial_{u},\partial_{v})=f_{uv}^{\perp}(q)\ \mbox{and}\
II(\partial_{v},\partial_{v})=f_{vv}^{\perp}(q).
\end{array}
$$

Given a vector $\nu\in N_{p}M$, the \emph{second fundamental form in
the direction $\nu$} of $M$ at $p$: $II_{\nu}:T_{q}\mathbb R^2\times T_{q}\mathbb R^2\rightarrow\mathbb{R}$
is defined as $II_{\nu}(X,Y)=\langle II(X,Y),\nu\rangle$, for all
$X,Y\in T_{q}\mathbb R^2$. The coefficients of $II_{\nu}$ in coordinates are
$$l_{\nu}(q)=\langle f_{uu}^{\perp},\nu\rangle(q),\ m_{\nu}(q)=\langle f_{uv}^{\perp},\nu\rangle(q)\ \mbox{and}\ n_{\nu}(q)=\langle
f_{vv}^{\perp},\nu\rangle(q).$$
For $X=a\partial_{u}+b\partial_{v}\in T_{q}\mathbb R^2$, we have
$II_{\nu}(X,X)=a^{2}l_{\nu}(q)+2abm_{\nu}(q)+b^{2}n_{\nu}(q)$ and fixing an
orthonormal frame $\{\nu_{1},\nu_{2}\}$ of $N_{p}M$,
$$\begin{array}{cl}\label{2ff}
II(X,X) & =II_{\nu_{1}}(X,X)+II_{\nu_{2}}(X,X)\\
        & =(a^{2}l_{\nu_{1}}+2abm_{\nu_{1}}+b^{2}n_{\nu_{1}})\nu_{1}+(a^{2}l_{\nu_{2}}+2abm_{\nu_{2}}+b^{2}n_{\nu_{2}})\nu_2,
\end{array}
$$
with the coefficients calculated in $q$. The
second fundamental form can also be represented by the matrix of coefficients
$$
\left(
  \begin{array}{ccc}
    l_{\nu_{1}} & m_{\nu_{1}} & n_{\nu_{1}} \\
    l_{\nu_{2}} & m_{\nu_{2}} & n_{\nu_{2}} \\
  \end{array}
\right).
$$
We identify $\R^2$ and $T_{q}\mathbb R^2$ by
$(x,y)\mapsto x\partial_u+y\partial_v$.
Let $C_q\subset T_{q}\mathbb R^2$ be the subset of
unit vectors:
$$
C_q=\{(x,y)\in T_{q}\mathbb R^2\,|\,x^2E(q)+2xyF(q)+y^2G(q)=1\},
$$
and let
$\eta_{q}:C_{q}\rightarrow N_{p}M$ be the map defined by
$$\eta_{q}(X)=II(X,X).$$
\begin{definition}\label{def:cp}
The image $\eta_{q}(C_{q})\subset N_pM$ is called
the \emph{curvature parabola} and is denoted by $\Delta_{p}$.
\end{definition}
The curvature parabola is a plane curve that may degenerate into a
line, a half-line or a point. Since $f$ has corank $1$ at
$q\in\mathbb R^2$, by changes of coordinates in the source and isometries in the target it can be written as
$f(u,v)=(u,f_{2}(u,v),f_{3}(u,v))$ with
$(f_{i})_{u}(q)=(f_{i})_{v}(q)=0$ for $i=2,3$.
Therefore $E=1$, $F=G=0$ and so $C_{q}=\{X=(\pm1,y)|\
y\in\mathbb{R}\}$. Fixing an orthonormal frame
$\{\nu_{1},\nu_{2}\}$ of $N_{p}M$,
\begin{equation}\label{parabola}
\eta(y)=(l_{\nu_{1}}+2m_{\nu_{1}}y+n_{\nu_{1}}y^{2})\nu_{1}+(l_{\nu_{2}}+2m_{\nu_{2}}y+n_{\nu_{2}}y^{2})\nu_{2}
\end{equation}
is a parametrisation for $\Delta_{p}$ in $N_{p}M$.

\begin{teo}[\cite{MartinsBallesteros}]\label{conditionsparabola}
Let $M\subset\mathbb{R}^{3}$ be a surface with a singularity of corank $1$ at $p\in M$. We
assume for simplicity that $p$ is the origin of $\mathbb{R}^{3}$ and denote by $j^{2}f(0)$
the $2$-jet of a local parametrisation $f:(\mathbb{R}^{2},0)\rightarrow(\mathbb{R}^{3},0)$
of $M$. Then the following holds:
\begin{itemize}
\item[(i)] $\Delta_{p}$ is a non-degenerate parabola if and only if $j^{2}f(0)\sim_{\mathcal{A}^{2}}(u,v^{2},uv)$;
\item[(ii)] $\Delta_{p}$ is a half-line if and only if $j^{2}f(0)\sim_{\mathcal{A}^{2}}(u,v^{2},0)$;
\item[(iii)] $\Delta_{p}$ is a line if and only if $j^{2}f(0)\sim_{\mathcal{A}^{2}}(u,uv,0)$;
\item[(iv)] $\Delta_{p}$ is a point if and only if $j^{2}f(0)\sim_{\mathcal{A}^{2}}(u,0,0)$.
\end{itemize}
Furthermore, if $f$ is given in Monge form such that
$$j^{2}f(0)=\left(u,\frac{1}{2}(a_{20}u^{2}+2a_{11}uv+a_{02}v^{2}),\frac{1}{2}(b_{20}u^{2}+2b_{11}uv+b_{02}v^{2})\right),$$
then the curvature parabola is parametrised by $$\eta(y)=(0,a_{20}+2a_{11}y+a_{02}y^{2},b_{20}+2b_{11}y+b_{02}y^{2})$$ and
\begin{itemize}
\item[(a)] $j^{2}f(0)\sim_{\mathcal{A}^{2}}(u,v^{2},uv)$ if and only if $a_{11}b_{02}-a_{02}b_{11}\neq0$;
\item[(b)] $j^{2}f(0)\sim_{\mathcal{A}^{2}}(u,v^{2},0)$ if and only if $a_{11}b_{02}-a_{02}b_{11}=0$ and $a_{02}^{2}+b_{02}^{2}>0$;
\item[(c)] $j^{2}f(0)\sim_{\mathcal{A}^{2}}(u,uv,0)$ if and only if $a_{02}=b_{02}=0$ and $a_{11}^{2}+b_{11}^{2}>0$;
\item[(d)] $j^{2}f(0)\sim_{\mathcal{A}^{2}}(u,0,0)$ if and only if $a_{02}=b_{02}=a_{11}=b_{11}=0$.
\end{itemize}
\end{teo}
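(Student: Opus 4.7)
The plan is twofold: first, compute the parametrisation of the curvature parabola explicitly in Monge coordinates to establish the displayed formula and classify its image in terms of the Monge coefficients; then, for each case, identify the condition on the coefficients with the $\mathcal{A}^2$-orbit of the corresponding normal form, which simultaneously proves (a)--(d) and (i)--(iv).

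For the parametrisation, I would start from the Monge 2-jet and read off $f_u(0) = (1,0,0)$ and $f_v(0) = 0$, so $T_pM$ is the $x$-axis and $\nu_1 = (0,1,0)$, $\nu_2 = (0,0,1)$ form an orthonormal frame of $N_pM$. This gives $E = 1$, $F = G = 0$ at the origin, while the Hessians of the second and third components yield $l_{\nu_1} = a_{20}$, $m_{\nu_1} = a_{11}$, $n_{\nu_1} = a_{02}$ and $l_{\nu_2} = b_{20}$, $m_{\nu_2} = b_{11}$, $n_{\nu_2} = b_{02}$. Substituting into (\ref{parabola}) gives the displayed parametrisation. To classify its image, set $w_1 = (a_{11}, b_{11})$ and $w_2 = (a_{02}, b_{02})$ and view $\eta$ as the polynomial curve $(a_{20}, b_{20}) + 2y\, w_1 + y^2\, w_2$ in the $(y,z)$-plane. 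A case analysis then gives: $\det(w_1,w_2) \neq 0$ yields a non-degenerate parabola; $\det(w_1,w_2) = 0$ with $w_2 \neq 0$ yields a half-line (writing $w_1 = \lambda w_2$ and completing the square shows $\eta(y) - (a_{20},b_{20}) = ((y+\lambda)^2 - \lambda^2)\, w_2$, whose image is a ray); $w_2 = 0$ with $w_1 \neq 0$ yields an injective affine map with a full line as image; both zero yields a single point. This matches exactly the coefficient conditions of (a)--(d).

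The remaining task is to match each coefficient condition to the $\mathcal{A}^2$-orbit of the indicated normal form, which completes both (a)--(d) and (i)--(iv). Substituting each of $(u,v^2,uv)$, $(u,v^2,0)$, $(u,uv,0)$, $(u,0,0)$ into the parametrisation immediately shows one direction of each equivalence. The converse reduction---producing explicit source and target diffeomorphisms that carry any 2-jet satisfying the given numerical condition to the corresponding normal form---is where I expect the main technical work to lie. A convenient first move is the quadratic target change $(x,y,z) \mapsto (x,\, y - \tfrac{1}{2}a_{20}x^2,\, z - \tfrac{1}{2}b_{20}x^2)$, which kills the $u^2$-terms and leaves a 2-jet of the form $(u,\, a_{11}uv + \tfrac{1}{2}a_{02}v^2,\, b_{11}uv + \tfrac{1}{2}b_{02}v^2)$. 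In case (a), the non-degeneracy $a_{11}b_{02} - a_{02}b_{11} \neq 0$ lets me solve a $2 \times 2$ linear system for a target change in the $(y,z)$-plane sending $(w_1,w_2)$ to the standard basis, reducing the 2-jet to $(u,v^2,uv)$. Cases (b)--(d) are strictly simpler: rotate $(y,z)$ to align the nonzero vector among $w_1, w_2$ with a coordinate axis, then eliminate any remaining cross term by a source change $v \mapsto v + \alpha u$.
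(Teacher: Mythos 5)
This is Theorem~\ref{conditionsparabola}, which the paper does not prove at all: it is imported verbatim from \cite{MartinsBallesteros}, so there is no in-paper argument to compare against. Your reconstruction follows what is essentially the standard (and the original reference's) strategy: compute $\eta(y)$ in Monge coordinates, classify the image of the quadratic curve $c+2yw_1+y^2w_2$ by the linear (in)dependence of $w_1=(a_{11},b_{11})$ and $w_2=(a_{02},b_{02})$, and then match the four mutually exclusive coefficient conditions to the four $\mathcal{A}^2$-normal forms by explicit jet-level reductions. The parametrisation computation, the four-way case analysis of the image, and the reductions in cases (a)--(d) (kill the $u^2$ terms by a quadratic target shear, then use a linear target change on $(y,z)$ and a source substitution $v\mapsto v+\alpha u$) are all correct; the only bookkeeping you gloss over is that the source change in case (b) reintroduces a $u^2$ term that must be removed by a second target shear, and that rescalings are needed to normalise the coefficients $\tfrac12$.

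There is, however, one genuine logical gap in the ``only if'' directions. Your reductions show that each coefficient condition \emph{implies} equivalence to the corresponding normal form, and that each normal form satisfies its own condition; but to conclude, say, that $j^2f(0)\sim_{\mathcal{A}^2}(u,v^2,0)$ \emph{forces} $a_{11}b_{02}-a_{02}b_{11}=0$ and $a_{02}^2+b_{02}^2>0$, you must also know that the four normal forms lie in pairwise distinct $\mathcal{A}^2$-orbits --- otherwise a jet satisfying condition (a) could in principle also be equivalent to $(u,v^2,0)$. This is standard (it is Mond's classification of corank~$1$ $2$-jets, and can be seen directly from $\mathcal{A}^2$-invariants such as $\dim\operatorname{span}\{Q_1,Q_2\}$ modulo $u^2$ and the restriction of the second intrinsic derivative to $\ker df_0$ with values in $\operatorname{coker} df_0$, which take the values $(2,\neq 0)$, $(1,\neq 0)$, $(1,0)$, $(0,0)$ in the four cases), but it must be invoked. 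A second, smaller omission: statements (i)--(iv) concern $\Delta_p$ as a geometric object, so you should note that $\Delta_p$ is independent of the choice of Monge form, i.e.\ invariant under source coordinate changes (where the tangential terms $f_xx_{uu}+f_yy_{uu}$ are killed by $\perp$) and target isometries, before computing it in one fixed Monge parametrisation.
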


A non zero tangent direction $X\in T_{q}\mathbb R^2$ is \emph{asymptotic}
if there is a non zero normal vector $\nu\in N_{p}M$ such that $II_{\nu}(X,Y)=0$,
for any $Y\in T_{q}\mathbb R^2$. Such a $\nu$ is called a
\emph{binormal direction}.

The parameter value $y\in\mathbb{R}$
corresponds to a unit tangent direction $X =\partial_{u}+y\partial_{v}\in C_{q}$. Denote by
$y_{\infty}$ the parameter value corresponding to the null tangent direction $X =\partial_{v}$.
If $\Delta_{p}$ degenerates to a line or a half-line, define
$\eta(y_{\infty})=\eta'(y_{\infty})=\eta'(y)/|\eta'(y)|$, where $y>0$ is any value such that
$\eta'(y)\neq0$. If $\Delta_{p}$ degenerates to a point $\nu$, then define
$\eta(y_{\infty})=\nu$ and $\eta'(y_{\infty})=0$. If $\Delta_{p}$ is a
non-degenerate parabola, $\eta(y_{\infty})$ and $\eta'(y_{\infty})$ are not defined.

\begin{lem}[\cite{MartinsBallesteros}]
A tangent direction in $T_{q}\mathbb R^2$ given by a parameter value $y\in\mathbb{R}\cup[y_{\infty}]$ is
asymptotic if and only if $\eta(y)$ and $\eta'(y)$ are collinear (provided they are defined).
\end{lem}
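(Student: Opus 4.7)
The plan is to reformulate the asymptotic condition as the linear dependence of two vectors in the normal plane, and then identify those vectors with $\eta(y)$ and $\eta'(y)$. By symmetry and bilinearity, a direction $X=\partial_u+y\partial_v$ is asymptotic if and only if there is a nonzero $\nu\in N_pM$ with $II_\nu(X,\partial_u)=II_\nu(X,\partial_v)=0$, that is, $\nu$ is simultaneously orthogonal to the two normal vectors $II(X,\partial_u)$ and $II(X,\partial_v)$. Since $N_pM$ is two-dimensional, such a nonzero $\nu$ exists precisely when these two vectors are linearly dependent.

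Next I would compute these vectors in the normal form $f=(u,f_2,f_3)$ from Section~\ref{section-notation}. Direct differentiation of \eqref{parabola} gives $\eta'(y)=2\,II(X,\partial_v)$, while the bilinear expansion $II(X,X)=II(X,\partial_u)+y\,II(X,\partial_v)$ yields $II(X,\partial_u)=\eta(y)-\tfrac{y}{2}\eta'(y)$. Thus the pair $\bigl(II(X,\partial_u),\,II(X,\partial_v)\bigr)$ is obtained from $\bigl(\eta(y),\,\tfrac12\eta'(y)\bigr)$ by the invertible triangular change $(a,b)\mapsto(a-yb,b)$, which preserves linear dependence. Hence $\eta(y)$ and $\eta'(y)$ are collinear if and only if $II(X,\partial_u)$ and $II(X,\partial_v)$ are, proving the lemma for finite $y$.

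It remains to handle $y_\infty$, which corresponds to $X=\partial_v$. I would go through the cases of Theorem~\ref{conditionsparabola}. In case (i) $\eta(y_\infty)$ and $\eta'(y_\infty)$ are not defined and the statement is vacuous. In cases (ii) and (iii) the Monge-form conditions $a_{11}b_{02}-a_{02}b_{11}=0$ and $a_{02}=b_{02}=0$ precisely encode the linear dependence of $II(\partial_v,\partial_u)$ and $II(\partial_v,\partial_v)$, so $\partial_v$ is asymptotic; on the other hand the convention $\eta(y_\infty)=\eta'(y_\infty)=\eta'(y)/|\eta'(y)|$ makes these two vectors equal, hence trivially collinear. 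In case (iv) the map $II$ vanishes on the $v$-entries, so $\partial_v$ is asymptotic, and $\eta(y_\infty)=\nu$, $\eta'(y_\infty)=0$ are collinear by convention.

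The only substantive point is the two-dimensionality of $N_pM$ used in the first paragraph together with the triangular identification in the second; the remaining $y_\infty$ analysis is a bookkeeping check that the ad hoc definitions given in the excerpt are compatible with the intrinsic collinearity. I do not expect any serious obstacle.
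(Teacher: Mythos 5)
The paper quotes this lemma from \cite{MartinsBallesteros} and supplies no proof of its own, so there is no internal argument to compare against; your proposal is correct and is the natural proof. The key reduction (asymptotic $\Leftrightarrow$ $II(X,\partial_u)$ and $II(X,\partial_v)$ linearly dependent, since $N_pM$ is two-dimensional), the identities $\eta'(y)=2\,II(X,\partial_v)$ and $II(X,\partial_u)=\eta(y)-\tfrac{y}{2}\eta'(y)$ in the normal form in which $\eta$ is defined, the unimodular change of pair, and the case check at $y_\infty$ against the conventions and Theorem~\ref{conditionsparabola} all hold up.
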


The parameter $y\in\mathbb{R}\cup[y_{\infty}]$ corresponding to an asymptotic direction
$X\in T_{q}\mathbb R^2$ is also called an asymptotic direction. The number of asymptotic directions is characterized by the topological type of the curvature parabola and when $\Delta_p$ is degenerate $y_{\infty}$ is an asymptotic direction (see also \cite{BenediniOset2} for an explanation):

\begin{itemize}
\item[(i)] If $\Delta_{p}$ is a non-degenerate parabola, there are $0,1$ or $2$ asymptotic
directions, according to the position of $p$: outside, on or outside the parabola,
respectively;
\item[(ii)] If $\Delta_{p}$ is a half-line such that the line that contains it does not pass through $p$, then there are two asymptotic directions,
$[y_{\nu},y_{\infty}]$, with $\eta(y_{\nu})$ being the vertex of $\Delta_{p}$, and if the line that contains it passes through $p$, then
every $y\in\mathbb{R}\cup[y_{\infty}]$ is an asymptotic direction;
\item[(iii)] If $\Delta_{p}$ is a line which does not pass through $p$ then $y_{\infty}$ is the only
asymptotic direction, and if it passes through $p$ then every $y\in\mathbb{R}\cup[y_{\infty}]$ is an asymptotic
direction;
\item[(iv)] If $\Delta_{p}$ is a point, every $y\in\mathbb{R}\cup[y_{\infty}]$
is an asymptotic direction.
\end{itemize}

\subsection{The umbilic curvature}

When $M$ is not a cross-cap singularity at $p$ the curvature parabola is degenerate. In this case a curvature can be defined.

We need to consider special frames on $N_pM$. When $\Delta_p$ is not a point (i.e. a half-line or a line), $y_{\infty}$ is well defined. Let $v_{\infty}$ be the binormal direction such that $\{\eta(y_{\infty}),v_{\infty}\}$ is an orthonormal positively oriented frame of $N_pM$. If $\Delta_p$ is a point which is not the origin then $\eta(y)$ is a non zero constant and we can consider the orthonormal frame given by $\{v,\eta(y)/|\eta(y)|\}$, where $v$ is a binormal direction. We call these frames \emph{adapted frames} of $N_pM$. When $\Delta_p$ is the origin, any frame is an adapted frame.

Given an adapted frame $\{\nu_1,\nu_2\}$ and $X\in C_q$ we have $$II(X,X)=II_{\nu_1}(X,X)\nu_1+II_{\nu_2}(X,X)\nu_2.$$ Notice that $II_{\nu_2}(X,X)$ does not depend on $X$ up to sign.

\begin{definition}[\cite{MartinsBallesteros}]
Given $X\in C_q$ and an adapted frame $\{\nu_1,\nu_2\}$ of $N_pM$ the \emph{umbilic curvature} of $M$ at $p$ is $$\kappa_u=|\langle II(X,X),\nu_2\rangle|=|II_{\nu_2}(X,X)|.$$
\end{definition}

Geometrically, $\kappa_u(p)$ measures the length of the projection of $\Delta_p$ on the infinity binormal direction when $\Delta_p$ is a line or a half-line and it is the distance between $\Delta_p$ and $p$ when $\Delta_p$ is a point.

\section{The axial curvature}

When $\Delta_p$ is a non-degenerate parabola, an adapted frame can be defined too. Let $v_d\in N_pM$ be the unitary vector in the direction of the directrix of the parabola and consider $v_a\in N_pM$ such that $\{v_a,v_d\}$ is a positively oriented orthonormal frame of $N_pM$. We call $v_a$ the \emph{axial vector} as it shares the direction of the axis of symmetry of the parabola when pointing towards the ``interior" of the parabola (see Figure \ref{frame}). When $\Delta_p$ is line or a half-line take $v_a=\eta(y_{\infty})$ and when $\Delta_p$ is a point which is not the origin $v_a$ is such that $\{v_a,\eta(y)/|\eta(y)|\}$ is an orthonormal positively oriented frame of $N_pM$.

\begin{figure}
\begin{center}
\includegraphics[width=0.5\linewidth]{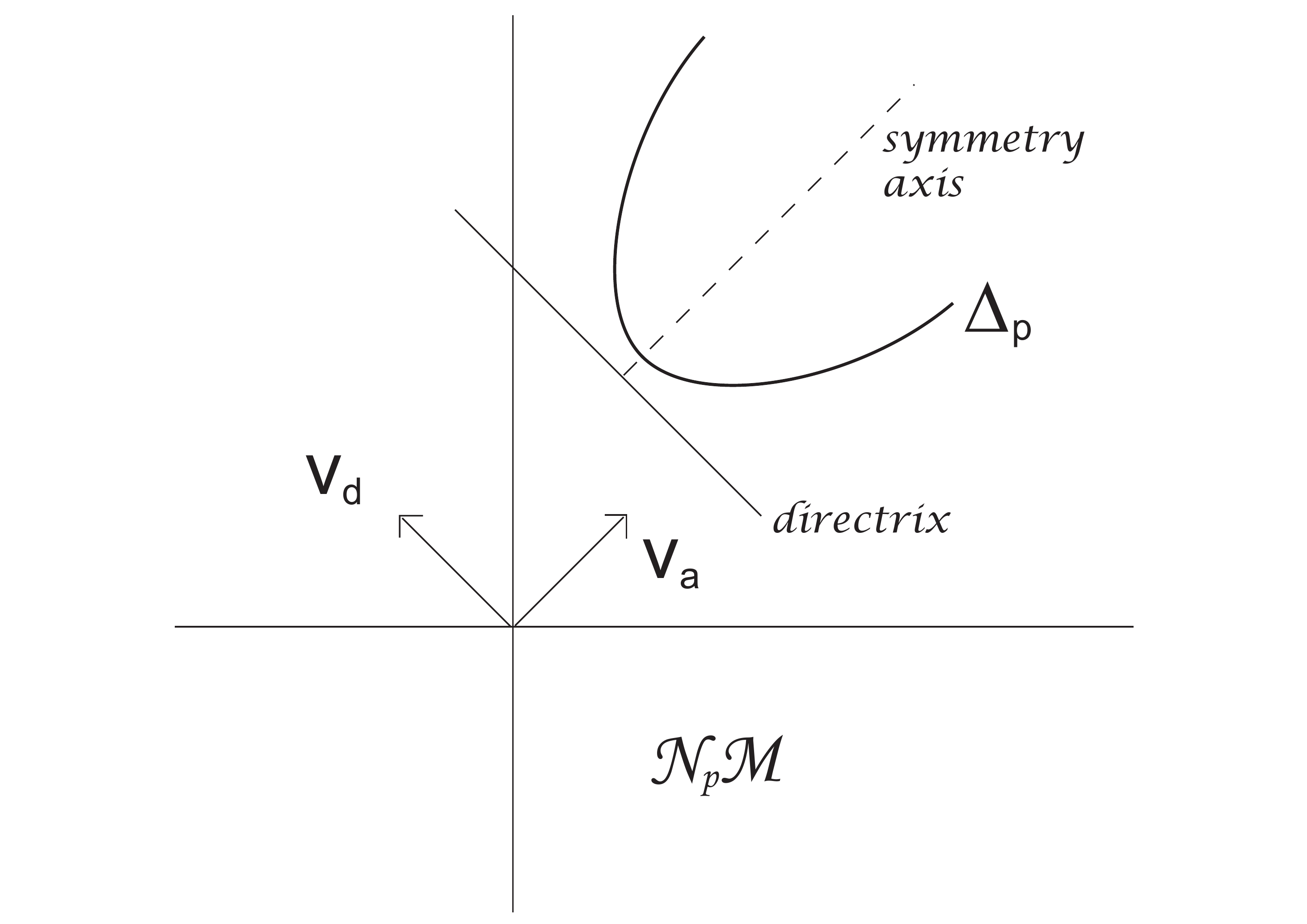}
\caption{Adapted frame for a non-degenerate parabola.}\label{frame}
\end{center}
\end{figure}

\begin{definition}
Given an adapted frame $\{v_a,\nu_2\}$ of $N_pM$ and $X\in T_q\mathbb R^2$ we define the \emph{axial normal curvature function} as $$K_{v_a}(X)=\langle II(X,X),v_a\rangle=II_{v_a}(X,X).$$ If $X\in C_q$, $X=\partial_u+y\partial_v$ and $II(X,X)=\eta(y)$ so we can consider $K_{v_a}(X)$ as a function on the parameter $y$. We call the number $$\kappa_a(p)=\min\{K_{v_a}(X):X\in C_q\}=\min\{\langle \eta(y),v_a\rangle:y\in\mathbb{R}\}$$ the \emph{axial curvature} of $M$ at $p$ (when it exists).
\end{definition}

Geometrically we have the following interpretations:

\begin{itemize}
\item[i)] When $\Delta_p$ is a non-degenerate parabola or a half-line, $\kappa_a(p)$ is the signed value of the extremal point of the projection of $\Delta_p$ on the direction given by $v_a$.
\item[ii)] When $\Delta_p$ is a line, the projection of $\Delta_p$ on the direction given by $v_a$ is the whole line and so $\kappa_a(p)$ is not bounded.
\item[iii)] When $\Delta_p$ is a point, the projection of $\Delta_p$ on the direction given by $v_a$ is the origin and so $\kappa_a(p)=0$.
\end{itemize}

\begin{rem}\label{principal}
In the case of regular surfaces in $\mathbb R^4$, the extremal points of the projection of the curvature ellipse in the direction orthogonal to a given normal direction $\nu$ are equal to the maximum and minimum values of the normal curvature in the direction $\nu$ and are therefore the $\nu$-principal curvatures (see Lemma 4.1 in \cite{nunoromerobringas}). In this sense $\kappa_a$ can be considered a ($v_a$-)principal curvature for $M$. This interpretation will be studied further in Section \ref{frontals}.
\end{rem}

From the definition and geometrical interpretation it follows that

\begin{prop}\label{zero}
$\kappa_a(p)=0$ if and only if $\Delta_p$ is a point, or $\eta(y_0)$ is parallel to $\nu_2$, where $y_0$ is a critical point of $K_{v_a}$ (see Figure \ref{ka0}).
\end{prop}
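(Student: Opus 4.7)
The plan is to proceed by case analysis on the topological type of $\Delta_p$, restricted to the three shapes for which $\kappa_a(p)$ is defined by Definition~3.2: a point, a half-line, or a non-degenerate parabola (the line case is excluded because the projection on $v_a$ is then unbounded below).

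First I would dispatch the case where $\Delta_p$ is a point directly from the adapted frame construction of Section~3. If $\Delta_p=\{0\}$ then $\eta\equiv 0$ and $\kappa_a(p)=0$ trivially. If $\Delta_p$ is a non-origin point, the frame $\{v_a,\eta(y)/|\eta(y)|\}$ is orthonormal by construction, so $\langle\eta(y),v_a\rangle=0$ for every $y$, giving $\kappa_a(p)=0$. This disposes of both directions of the equivalence under the hypothesis ``$\Delta_p$ is a point''.

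For the remaining two shapes, I expand the axial normal curvature function in the adapted frame $\{v_a,\nu_2\}$ using \eqref{parabola}:
\[
K_{v_a}(y)=l_{v_a}+2m_{v_a}y+n_{v_a}y^{2},
\]
a polynomial of degree at most $2$. Its critical points are the solutions of $K'_{v_a}(y_0)=2\langle\eta'(y_0),v_a\rangle=0$, i.e.\ those $y_0$ for which $\eta'(y_0)$ is parallel to $\nu_2$. When $\Delta_p$ is a non-degenerate parabola, $v_a$ is by definition the axis direction, so $n_{v_a}\neq 0$ and $K_{v_a}$ attains its minimum at the unique critical point $y_0=-m_{v_a}/n_{v_a}$, which parametrises the vertex of $\Delta_p$. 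When $\Delta_p$ is a half-line, the minimum is attained at the vertex $y_\nu$, where $\eta'(y_\nu)=0$ and hence $y_\nu$ is a critical point of $K_{v_a}$ in the above sense.

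In each of these two cases $\kappa_a(p)=K_{v_a}(y_0)=\langle\eta(y_0),v_a\rangle$, and this quantity vanishes precisely when $\eta(y_0)$ has no component along $v_a$, i.e.\ when $\eta(y_0)$ is parallel to $\nu_2$; combined with the point case this gives the stated equivalence. The main thing to be careful about will be the half-line case, where $K_{v_a}$ may fail to be a strict quadratic and the critical point must be identified geometrically via $\eta'(y_\nu)=0$ rather than from the discriminant of $K_{v_a}$; once the parametrisation \eqref{parabola} is written explicitly in the adapted frame this is immediate from Theorem~\ref{conditionsparabola}(b).
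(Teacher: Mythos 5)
Your proposal is correct and follows essentially the same route as the paper, which states the proposition without a written proof as an immediate consequence of the definition and the geometric interpretation of $\kappa_a$ as the extremal value of the projection of $\Delta_p$ onto $v_a$; your case analysis (point / half-line / non-degenerate parabola, with the line case excluded) is just the explicit version of that argument. The only blemish is the harmless slip $K'_{v_a}(y_0)=2\langle\eta'(y_0),v_a\rangle$, which should read $K'_{v_a}(y_0)=\langle\eta'(y_0),v_a\rangle$; the conclusion that critical points are where $\eta'(y_0)$ is parallel to $\nu_2$ is unaffected.
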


\begin{figure}
\begin{center}
\includegraphics[width=0.7\linewidth]{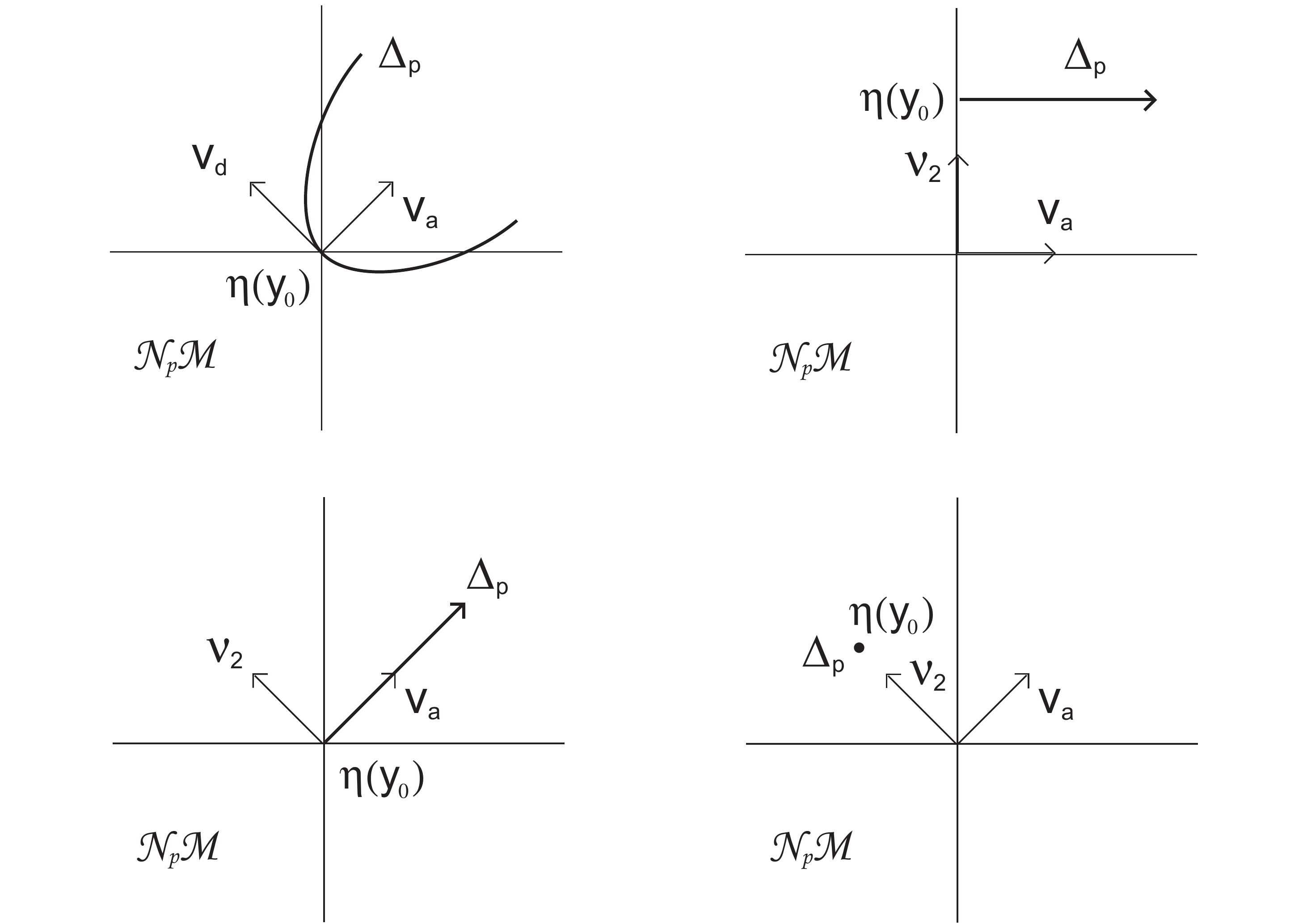}
\caption{Situations when the axial curvature is 0.}\label{ka0}
\end{center}
\end{figure}

For any $X\in T_q \mathbb R^2$, $II_{v_a}(X,X)=|X|^2II_{v_a}(\frac{X}{|X|},\frac{X}{|X|})$. So $$\kappa_a(p)=\min\{\frac{II_{v_a}(X,X)}{I(X,X)}:X\in T_q\mathbb R^2\}=min\{\frac{K_{v_a}(X)}{I(X,X)}:X\in T_q\mathbb R^2\}.$$

\begin{prop}\label{formula}
Let $M$ be given by the image of $f$ in Monge form such that
$j^{2}f(0)=\left(u,\frac{1}{2}(a_{20}u^{2}+2a_{11}uv+a_{02}v^{2}),\frac{1}{2}(b_{20}u^{2}+2b_{11}uv+b_{02}v^{2})\right)$
and such that $p\in M$ is the origin in $\mathbb R^3$. Suppose that $\Delta_p$ is a non-degenerate parabola or a half-line, then
\begin{equation}\label{eq:kappaamonge}
\kappa_a(p)=\frac{1}{\sqrt{a_{02}^2+b_{02}^2}}\big( (a_{20}a_{02}+b_{20}b_{02})-\frac{(a_{11}a_{02}+b_{11}b_{02})^2}{a_{02}^2+b_{02}^2}\big).
\end{equation}
\end{prop}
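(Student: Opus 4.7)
The plan is to compute $v_a$ explicitly in the given Monge coordinates, write $\langle \eta(y),v_a\rangle$ as an elementary quadratic polynomial in $y$, and minimize.

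First I would identify the axial vector. Fix the orthonormal basis $\nu_1=(0,1,0)$, $\nu_2=(0,0,1)$ of $N_pM$, so that the curvature parabola reads $\eta(y)=A(y)\nu_1+B(y)\nu_2$ with
\[
A(y)=a_{20}+2a_{11}y+a_{02}y^{2},\qquad B(y)=b_{20}+2b_{11}y+b_{02}y^{2}.
\]
Both hypotheses (non-degenerate parabola or half-line) force $a_{02}^{2}+b_{02}^{2}>0$ by parts (a) and (b) of Theorem \ref{conditionsparabola}. The idea is that the ``opening direction'' of the parabola is controlled by the leading coefficients: as $|y|\to\infty$ we have $\eta(y)=y^{2}(a_{02}\nu_1+b_{02}\nu_2)+O(y)$, so the axis of symmetry of the non-degenerate parabola is parallel to $a_{02}\nu_1+b_{02}\nu_2$ and points into the interior of the parabola in this direction. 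In the half-line case the same vector is $\eta(y_\infty)$, computed via $\eta'(y)/|\eta'(y)|\to (a_{02}\nu_1+b_{02}\nu_2)/\sqrt{a_{02}^{2}+b_{02}^{2}}$. Either way, the axial vector is
\[
v_a=\frac{1}{\sqrt{a_{02}^{2}+b_{02}^{2}}}(a_{02}\nu_1+b_{02}\nu_2).
\]

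Next I would compute the projection. A direct expansion gives
\[
\langle \eta(y),v_a\rangle
=\frac{(a_{02}^{2}+b_{02}^{2})\,y^{2}+2(a_{02}a_{11}+b_{02}b_{11})\,y+(a_{02}a_{20}+b_{02}b_{20})}{\sqrt{a_{02}^{2}+b_{02}^{2}}}.
\]
Because the leading coefficient $a_{02}^{2}+b_{02}^{2}$ is strictly positive, this is a genuine parabola in $y$ with a unique global minimum, attained at
\[
y^{*}=-\frac{a_{02}a_{11}+b_{02}b_{11}}{a_{02}^{2}+b_{02}^{2}}.
\]
Substituting $y^{*}$ and using the standard formula $\min(\alpha y^{2}+2\beta y+\gamma)=\gamma-\beta^{2}/\alpha$ yields exactly the right-hand side of \eqref{eq:kappaamonge}.

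The only real issue is the first step: justifying that $v_a$ has the claimed coordinate form, and in particular that the sign is correct (i.e.\ $v_a$ points into the parabola, not away from it). The half-line case is immediate from the limiting definition of $\eta(y_{\infty})$ recalled before Lemma 2.3. For the non-degenerate parabola it suffices to observe that $\eta(y)\cdot(a_{02}\nu_1+b_{02}\nu_2)\to+\infty$ as $|y|\to\infty$, so this vector points into the parabola's interior, matching the convention for $v_a$ fixed at the start of Section 3. Once $v_a$ is pinned down, the rest is the elementary quadratic minimization above.
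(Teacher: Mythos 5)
Your proof is correct and follows essentially the same route as the paper: both identify $v_a=\frac{1}{\sqrt{a_{02}^{2}+b_{02}^{2}}}(a_{02}\nu_1+b_{02}\nu_2)$, expand $\langle\eta(y),v_a\rangle$ as a quadratic in $y$, and evaluate at its critical point. Your extra paragraph justifying the coordinate form and sign of $v_a$ (which the paper simply asserts) is a welcome addition, not a deviation.
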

\begin{proof}
When $f$ is given as above, the curvature parabola is parameterised by $$\eta(y)=(0,a_{20}+2a_{11}y+a_{02}y^{2},b_{20}+2b_{11}y+b_{02}y^{2}).$$ If $\Delta_p$ is a non-degenerate parabola or a half-line, then $v_a=\frac{1}{\sqrt{a_{02}^2+b_{02}^2}}(a_{02},b_{02})$. (For completion we point out that in view of part 2 of Theorem \ref{conditionsparabola}, when $\Delta_p$ is a line we can take $v_a=\frac{1}{\sqrt{a_{11}^2+b_{11}^2}}(a_{11},b_{11})$ and when $\Delta_p$ is a point different from the origin we take $v_a=\frac{1}{\sqrt{a_{20}^2+b_{20}^2}}(-b_{20},a_{20})$.) So
\begin{align*}
K_{v_a}(y)&=\langle \eta(y),v_a\rangle\\
&=\frac{1}{\sqrt{a_{02}^2+b_{02}^2}}\big( (a_{20}a_{02}+b_{20}b_{02})+2(a_{11}a_{02}+b_{11}b_{02})y+(a_{02}^2+b_{02}^2)y^2 \big).
\end{align*}
We differentiate $K_{v_a}$ with respect to $y$ and equal to 0 to obtain the singular point $y_0=\frac{-(a_{11}a_{02}+b_{11}b_{02})}{a_{02}^2+b_{02}^2}$. Notice that $y_0$ is infinite if $\Delta_p$ is a line. Finally $$\kappa_a(p)=K_{v_a}(y_0)=\frac{1}{\sqrt{a_{02}^2+b_{02}^2}}\big( (a_{20}a_{02}+b_{20}b_{02})-\frac{(a_{11}a_{02}+b_{11}b_{02})^2}{a_{02}^2+b_{02}^2}\big).$$
\end{proof}

Furthermore, we have the following expression.
\begin{prop}\label{prop:general}
If $f$ satisfies that $\Delta_p$ is a non-degenerate parabola or a half-line,
and a coordinate system $(u,v)$ satisfies
$f_u(q)\ne0$ and $f_v(q)=0$,
then
\begin{align*}
\kappa_a
=&
\Big(
\inner{f_u}{f_u}
\big(\inner{f_u}{f_u}\inner{f_{vv}}{f_{vv}}
-\inner{f_u}{f_{vv}}^2\big)
\Big)^{-3/2}\\
&\hspace{2mm}
\Big(
\big(\inner{f_u}{f_{uu}} \inner{f_u}{f_{vv}}
- \inner{f_u}{f_u} \inner{f_{uu}}{f_{vv}}\big)
\big(\inner{f_u}{f_{vv}}^2
- \inner{f_u}{f_u} \inner{f_{vv}}{f_{vv}}\big)\\
&\hspace{5mm}
-\big(\inner{f_u}{f_{uv}} \inner{f_u}{f_{vv}}
- \inner{f_u}{f_u} \inner{f_{uv}}{f_{vv}}\big)^2
\Big)
\end{align*}
at $q$.
\end{prop}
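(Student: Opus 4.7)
The plan is to mimic the proof of Proposition~\ref{formula} in more general coordinates: parametrize $\Delta_p$, identify $v_a$, minimize the projection, and then rewrite the result using only ambient inner products. The hypothesis $f_u(q)\ne 0$ and $f_v(q)=0$ gives $E:=\langle f_u,f_u\rangle>0$ and $F=G=0$ at $q$, with $N_pM=\{f_u\}^\perp$. Parametrising $X\in C_q$ by $X=(1/\sqrt{E})\partial_u+(y/\sqrt{E})\partial_v$, the curvature parabola becomes
\[
\eta(y)=\tfrac{1}{E}\bigl(f_{uu}^\perp+2y\,f_{uv}^\perp+y^2 f_{vv}^\perp\bigr).
\]
The leading term $f_{vv}^\perp/E$ is the axis direction; it is nonzero by the assumption on $\Delta_p$, and checking against the Monge case of Proposition~\ref{formula} (where $f_{vv}^\perp=f_{vv}$) fixes the orientation as $v_a=f_{vv}^\perp/|f_{vv}^\perp|$.

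Next I would compute $\langle\eta(y),v_a\rangle$, which is a quadratic in $y$ with positive leading coefficient $|f_{vv}^\perp|/E$, and complete the square. The minimum is attained at a finite value of $y$ (so the null tangent direction $\partial_v$ does not interfere) and equals
\[
\kappa_a \;=\; \frac{|f_{vv}^\perp|^2\,\langle f_{uu}^\perp,f_{vv}^\perp\rangle-\langle f_{uv}^\perp,f_{vv}^\perp\rangle^2}{E\,|f_{vv}^\perp|^3}.
\]

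Finally, I would eliminate the perpendicular projections. Since $N_pM=\{f_u\}^\perp$, one has $w^\perp=w-(\langle w,f_u\rangle/E)\,f_u$, from which a direct expansion yields the identity
\[
E\,\langle w_1^\perp,w_2^\perp\rangle \;=\; \langle f_u,f_u\rangle\langle w_1,w_2\rangle-\langle f_u,w_1\rangle\langle f_u,w_2\rangle
\]
for any $w_1,w_2$. Applying this with $(w_1,w_2)$ equal to $(f_{uu},f_{vv})$, $(f_{uv},f_{vv})$, and $(f_{vv},f_{vv})$ and substituting into the previous display, the denominator $E|f_{vv}^\perp|^3$ becomes $\bigl(\langle f_u,f_u\rangle(\langle f_u,f_u\rangle\langle f_{vv},f_{vv}\rangle-\langle f_u,f_{vv}\rangle^2)\bigr)^{3/2}$, while the numerator regroups into exactly the two bracketed products shown in the statement: the pair of minus signs introduced by the identity on $\langle f_{uu}^\perp,f_{vv}^\perp\rangle$ and $\langle f_{vv}^\perp,f_{vv}^\perp\rangle$ combines to match the signs of the first product, and the sign on $\langle f_{uv}^\perp,f_{vv}^\perp\rangle$ is irrelevant because it is squared. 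The main obstacle is precisely this bookkeeping: nothing conceptually new is required beyond the Monge case, but one must keep careful track of the powers of $E$ and the several sign flips that arise when converting from $\langle\cdot^\perp,\cdot^\perp\rangle$ to ambient inner products.
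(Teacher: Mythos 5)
Your proof is correct, and it takes a genuinely different route from the paper. The paper first establishes Lemma \ref{lem:special}: it proves by a change-of-variables computation that $\inner{f_{uu}}{f_{vv}}-\inner{f_{uv}}{f_{vv}}^2$ is independent of the choice of coordinates normalized so that $|f_u(q)|=|f_{vv}(q)|=1$ and $\inner{f_u(q)}{f_{vv}(q)}=0$, checks it against the Monge form, and then obtains the general formula of Proposition \ref{prop:general} by expressing an arbitrary coordinate system $(x,y)$ with $f_y(q)=0$ in terms of such a normalized one and substituting the resulting expressions for $x_u$, $y_v$, $x_{vv}$. You instead work directly in the given coordinates: you parametrize $\Delta_p$ via orthogonal projections onto $N_pM=\{f_u\}^\perp$, identify $v_a=f_{vv}^\perp/|f_{vv}^\perp|$ (which agrees with the lemma the paper only proves later, in Section 5, where $v_a=((f_u\times f_{vv})\times f_u)/|(f_u\times f_{vv})\times f_u|$, i.e.\ exactly the normalized $f_u$-orthogonal component of $f_{vv}$), minimize the resulting quadratic, and convert $\inner{\cdot^\perp}{\cdot^\perp}$ to ambient inner products via $E\inner{w_1^\perp}{w_2^\perp}=\inner{f_u}{f_u}\inner{w_1}{w_2}-\inner{f_u}{w_1}\inner{f_u}{w_2}$; I checked the powers of $E$ and the signs and they do reproduce the stated expression, with denominator $(EC)^{3/2}$ where $C=E\inner{f_{vv}}{f_{vv}}-\inner{f_u}{f_{vv}}^2=E|f_{vv}^\perp|^2$. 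Your approach is more self-contained and avoids the normalization step entirely, at the cost of having to justify the identification and orientation of $v_a$ (your appeal to the Monge case is legitimate, though one could argue intrinsically that the parabola or half-line opens in the direction of the positive multiple of $f_{vv}^\perp$, which is the defining property of $v_a$ in both cases); the paper's approach buys the invariance statement of Lemma \ref{lem:special}, which it reuses elsewhere (e.g.\ in Proposition \ref{a1} and the intrinsic formula).
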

To prove this proposition, we show the following lemma.
\begin{lem}\label{lem:special}
If $f$ satisfies that $\Delta_p$ is a non-degenerate parabola or a half-line,
and a coordinate system $(u,v)$ satisfies
$f_u(q)\ne0$, $f_v(q)=0$,
$|f_u(q)|=|f_{vv}(q)|=1$ and
$\inner{f_u(q)}{f_{vv}(q)}=0$,
then
\begin{equation}\label{eq:kaspecial}
\kappa_a(p)
=
\Big(\inner{f_{uu}}{f_{vv}}-\inner{f_{uv}}{f_{vv}}^2\Big)(q).
\end{equation}
\end{lem}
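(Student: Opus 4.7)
The plan is to exploit the special adapted coordinates to reduce the computation of $\kappa_a$ to a short quadratic minimization. First I note that the conditions $|f_u(q)|=1$, $f_v(q)=0$, $|f_{vv}(q)|=1$ and $\inner{f_u(q)}{f_{vv}(q)}=0$ imply $E(q)=1$, $F(q)=G(q)=0$, so the unit circle in $T_q\R^2$ reduces to $C_q=\{\pm\partial_u+y\partial_v:y\in\R\}$. Moreover, $T_pM$ is spanned by the unit vector $f_u(q)$, and the orthogonality of $f_u(q)$ and $f_{vv}(q)$ together with $|f_{vv}(q)|=1$ shows that $f_{vv}(q)$ is a unit vector in $N_pM$; in particular $f_{vv}^\perp(q)=f_{vv}(q)$.

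Next I identify the axial vector $v_a$. With $X=\partial_u+y\partial_v\in C_q$ one has
\begin{equation*}
\eta(y)=II(X,X)=f_{uu}^\perp(q)+2y\,f_{uv}^\perp(q)+y^2 f_{vv}(q),
\end{equation*}
so the quadratic coefficient of the parabola is the unit vector $f_{vv}(q)$. In the non-degenerate parabola case, the axis of symmetry of $\eta$ is parallel to this quadratic coefficient, and the orientation convention that $v_a$ points toward the interior of the parabola forces $v_a=f_{vv}(q)$. In the half-line case, $v_a=\eta(y_\infty)$ is by definition the unit vector in the direction of $\lim_{y\to\infty}\eta'(y)/|\eta'(y)|$, which again equals $f_{vv}(q)$. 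Either way, $v_a=f_{vv}(q)$.

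Finally, since $f_{vv}(q)\perp T_pM$, the tangential corrections hidden in $f_{uu}^\perp$ and $f_{uv}^\perp$ are annihilated by the inner product with $f_{vv}(q)$, so
\begin{equation*}
K_{v_a}(y)=\inner{\eta(y)}{v_a}=\inner{f_{uu}}{f_{vv}}(q)+2y\inner{f_{uv}}{f_{vv}}(q)+y^2,
\end{equation*}
a quadratic in $y$ with positive leading coefficient. Differentiating and equating to zero gives $y_0=-\inner{f_{uv}}{f_{vv}}(q)$, and substitution yields $\kappa_a(p)=\inner{f_{uu}}{f_{vv}}(q)-\inner{f_{uv}}{f_{vv}}^2(q)$, which is \eqref{eq:kaspecial}. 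The only non-routine step is the identification of the sign of $v_a$ with $+f_{vv}(q)$ rather than $-f_{vv}(q)$; this is forced by the two conventions above. Once $v_a$ is pinned down, the rest of the argument is a direct quadratic minimization entirely parallel to the proof of Proposition \ref{formula}.
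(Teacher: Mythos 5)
Your proof is correct, but it follows a genuinely different route from the paper's. The paper first proves that the right-hand side of \eqref{eq:kaspecial} is invariant among all coordinate systems satisfying the normalization (by writing out the chain rule for a change of such coordinates and checking that $\inner{f_{uu}}{f_{vv}}-\inner{f_{uv}}{f_{vv}}^2$ is unchanged), and then exhibits one concrete normalized coordinate system obtained by rescaling the Monge form, in which the expression is matched against the formula \eqref{eq:kappaamonge} already established in Proposition \ref{formula}. You instead work directly in an arbitrary normalized coordinate system: you observe that $f_{vv}(q)$ is a unit normal vector, that it is the leading coefficient of $\eta(y)=f_{uu}^\perp+2yf_{uv}^\perp+y^2f_{vv}^\perp$, and hence that $v_a=f_{vv}(q)$ by the orientation conventions in both the non-degenerate and half-line cases; the result then falls out of minimizing an explicit quadratic. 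Each step checks out (in particular the identification $v_a=f_{vv}(q)$ agrees with what the paper proves separately later, in the lemma opening Section 5, and the sign of $v_a$ is correctly pinned down by the ``interior of the parabola'' convention). Your argument is shorter, self-contained, independent of Proposition \ref{formula}, and yields $v_a=f_{vv}(q)$ as a by-product; it also establishes coordinate-independence of the right-hand side a fortiori, since it shows the expression equals the coordinate-free quantity $\kappa_a$ in every normalized chart. What the paper's route buys is that the invariance computation is made explicit and reusable (it is essentially the same chain-rule bookkeeping that drives the proof of Proposition \ref{prop:general}), whereas your route would still require that computation to pass from Lemma \ref{lem:special} to the general formula.
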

\begin{proof}
Firstly we show
\eqref{eq:kaspecial}
does not depend on the choice of the coordinate systems
satisfying
the assumption of the lemma.
Let $(x,y)=(x(u,v),y(u,v))$ be another coordinate system
satisfying that
$f_x(q)\ne0$, $f_y(q)=0$,
$|f_x(q)|=|f_{yy}(q)|=1$ and
$\inner{f_x(q)}{f_{yy}(q)}=0$.
Since
$$
f_u=f_xx_u+f_yy_u,\quad
f_v=f_xx_v+f_yy_v,
$$
and $f_y(q)=f_v(q)=0$, it holds that
$x_v(q)=0$.
Moreover, since $|f_x(q)|=1$ we have $x_u(q)^2=1$.
Furthermore, since
$$
f_{vv}(q)
=
f_x(q)x_{vv}(q)+f_{yy}(q)y_v(q)^2$$
and $|f_{yy}(q)|=|f_{vv}(q)|=1$,
$\inner{f_x(q)}{f_{yy}(q)}=\inner{f_u(q)}{f_{vv}(q)}=0$,
we have
$x_{vv}(q)=0$, $y_v(q)=1$.
Substituting
\begin{align*}
f_{uu}
=&
f_{xx}x_u^2+2f_{xy}x_uy_u+f_{yy}y_u^2+f_xx_{uu},\\
f_{uv}
=&
f_{xy}x_uy_v+f_{yy}y_uy_v+f_xx_{uv},
\end{align*}
into \eqref{eq:kaspecial},
we see
$$
\inner{f_{uu}}{f_{vv}}-\inner{f_{uv}}{f_{vv}}^2
=
\inner{f_{xx}}{f_{yy}}-\inner{f_{xy}}{f_{yy}}^2.
$$
Thus \eqref{eq:kaspecial} does not depend on the
coordinate system satisfying
the assumption of the lemma.
If $f(x,y)$ satisfies
\begin{align*}
j^{2}f(0)=&\Bigg(x,
\dfrac{a_{20}}{2}x^{2}+\dfrac{2a_{11}}{(a_{02}^2+b_{02}^2)^{1/4}}xy
+\dfrac{a_{02}}{2(a_{02}^2+b_{02}^2)^{1/2}}y^{2},\\
&\hspace{10mm}
\dfrac{b_{20}}{2}x^{2}+\dfrac{2b_{11}}{(a_{02}^2+b_{02}^2)^{1/4}}xy
+\dfrac{b_{02}}{2(a_{02}^2+b_{02}^2)^{1/2}}y^{2}\Bigg),
\end{align*}
then this satisfies the assumption of the lemma.
Under this coordinate system,
we easily see that
$$
\inner{f_{uu}}{f_{vv}}-\inner{f_{uv}}{f_{vv}}^2
$$
is equal to
\eqref{eq:kappaamonge}.
This shows the assertion.
\end{proof}
We remark that the existence of a coordinate system
of Lemma \ref{lem:special} can be shown easily.
\begin{proof}[Proof of Proposition \ref{prop:general}]
Let $(u,v)$ be a coordinate system satisfying
$$
f_u(q)\ne0,\
f_v(q)=0,\
|f_u(q)|=|f_{vv}(q)|=1\text{ and }
\inner{f_u(q)}{f_{vv}(q)}=0,
$$
and let
$(x,y)=(x(u,v),y(u,v))$ be another coordinate system satisfying
$f_y(q)=0$.
Then $x_v(q)=0$.
By
\begin{equation}
\label{eq:fuvdiff}
\begin{aligned}
f_u=&f_xx_u\\
f_{uu}=& y_u (f_{yy} y_u+x_u f_{xy})+x_u (y_u f_{xy}+x_u f_{xx})
+f_x x_{uu}\\
f_{uv}=&
y_v f_{yy} y_u+y_v x_u f_{xy}+f_x x_{uv}\\
f_{vv}=&
y_v^2 f_{yy}+x_{vv} f_x,
\end{aligned}
\end{equation}
at $q$, we have
\begin{equation}
\label{eq:xuyvxvv}
\begin{aligned}
x_u=&\pm1/\sqrt{\inner{f_x}{f_x}},\\
y_v=&
\left(\dfrac{\inner{f_x}{f_x}}{
\inner{f_x}{f_x}\inner{f_{yy}}{f_{yy}}
-\inner{f_x}{f_{yy}}^2}\right)^{1/4},\\
x_{vv}=&
-\left(
\dfrac{\inner{f_x}{f_x}}{
\inner{f_x}{f_x}\inner{f_{yy}}{f_{yy}}
-\inner{f_x}{f_{yy}}^2}\right)^{1/2}
\dfrac{\inner{f_x}{f_{yy}}}{\inner{f_x}{f_x}}
\end{aligned}
\end{equation}
Substituting \eqref{eq:fuvdiff} using
\eqref{eq:xuyvxvv} into \eqref{eq:kaspecial},
we have the assertion.
\end{proof}

\begin{ex}
\begin{itemize}
\item[i)] Consider the cross-cap singularity parameterised by $(u,u^2+v^2,2u^2+uv)$. The curvature parabola is a non-degenerate parabola and is parameterised by $\eta(y)=(2+2y^2,4+2y)$. In this case $v_a=(1,0)$ and $\kappa_a=2$.
\item[ii)] Consider the cuspidal edge parameterised by $f(u,v)=(u,\frac{a_{20}}{2}u^2+v^2,\frac{b_{20}}{2}u^2+v^3)$. The curvature parabola is a half-line parameterised by $\eta(y)=(a_{20}+2y^2,b_{20})$. Here $v_a=(1,0)$ and $\kappa_u=b_{20},\kappa_a=a_{20}.$
\end{itemize}
\end{ex}

\begin{rem}
Similarly to $\kappa_u$, $\kappa_a$ is independent of the choice of adapted frame of $N_pM$ and of parametrisation of $\Delta_p$ but may depend on the parametrisation of $M$. In fact, for the cuspidal edge parameterised by $f(u,v)=(u,u^2+v^2,v^3)$ the curvature parabola is parameterised by $\eta(y)=(2+2y^2,0)$ and $\kappa_u=0,\kappa_a=2$. On the other hand, for the same cuspidal edge parameterised by $f(u,v)=(u,u^2+(v^3+u)^2,(v^3+u)^3)$ the curvature parabola is the point $(4,0)$ and $\kappa_u=4,\kappa_a=0.$
\end{rem}

In \cite{hasegawahondaetal1}, it is proven that, taking a generic normal form for the cross-cap singularity $$f(u,v)=(u,c_{20}u^2+c_{11}uv+c_{02}v^2+O(3)(u,v),uv+O(3)(v)),$$ the coefficients $c_{20},c_{11},c_{02}$ are intrinsic invariants. Therefore, using Proposition \ref{formula} for this normal form, we get $\kappa_a(p)=2c_{20}-\frac{c_{11}^2}{2c_{02}}$, which means that the axial curvature is an intrinsic invariant for cross-cap singularities. On the other hand, we will prove in the next section that the axial curvature is equal to the singular curvature for frontals, which is also an intrinsic invariant (see \cite{sajietalannals}). Therefore, the axial curvature is an intrinsic invariant for frontals too. We can prove this in general.

\begin{prop}[Intrinsic formula for the axial curvature]
If $f$ satisfies that $\Delta_p$ is a non-degenerate parabola or a half-line, and $E,F$ and $G$ are the coefficients of the first fundamental form
$$
\kappa_a(p)
=
\frac
{\Big(
\big(\frac{E_u}{2} F_v
- E (F_{uv}-\frac{E_{vv}}{2})\big)
\big(F_v^2
- E \frac{G_{vv}}{2}\big)
-\big(\frac{E_v}{2} F_v
- E \frac{G_{uv}}{2}\big)^2
\Big)}{\Big(
E
\big(E\frac{G_{vv}}{2}
-F_v^2\big)
\Big)^{3/2}}
$$
where $E_u=\frac{\partial E}{\partial u}$ and so on.
\end{prop}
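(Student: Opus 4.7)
The plan is to deduce the intrinsic formula from Proposition \ref{prop:general} by expressing each of the seven inner products of $f_u, f_{uu}, f_{uv}, f_{vv}$ appearing in that proposition in terms of $E,F,G$ and their first and second partial derivatives evaluated at $q$. The crucial geometric input is that $q$ is a corank $1$ singular point with $f_v(q)=0$ (we may assume this after a source change of coordinates), which makes several of the boundary terms that arise in these derivative expansions vanish automatically.

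First, differentiating the definitions $E=\inner{f_u}{f_u}$, $F=\inner{f_u}{f_v}$, $G=\inner{f_v}{f_v}$ once and evaluating at $q$ produces the easy identities
\begin{align*}
\inner{f_u}{f_u} &= E, & \inner{f_u}{f_{uu}} &= \tfrac12 E_u, \\
\inner{f_u}{f_{uv}} &= \tfrac12 E_v, & \inner{f_u}{f_{vv}} &= F_v,
\end{align*}
where the last one comes from $F_v=\inner{f_{uv}}{f_v}+\inner{f_u}{f_{vv}}$ and the vanishing of $f_v(q)$. The same device applied to $G_{uv}=2\inner{f_{uvv}}{f_v}+2\inner{f_{uv}}{f_{vv}}$ and $G_{vv}=2\inner{f_{vvv}}{f_v}+2\inner{f_{vv}}{f_{vv}}$ kills the $f_v$-factor terms and gives $\inner{f_{uv}}{f_{vv}}(q)=\tfrac12 G_{uv}(q)$ and $\inner{f_{vv}}{f_{vv}}(q)=\tfrac12 G_{vv}(q)$.

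The only delicate step is expressing $\inner{f_{uu}}{f_{vv}}(q)$ through first fundamental form data, since it is not immediately the derivative of any single coefficient. For this I would compare the two second derivatives $E_{vv}=2\inner{f_{uvv}}{f_u}+2\inner{f_{uv}}{f_{uv}}$ and $F_{uv}=\inner{f_{uuv}}{f_v}+\inner{f_{uu}}{f_{vv}}+\inner{f_{uv}}{f_{uv}}+\inner{f_u}{f_{uvv}}$. At $q$ the term $\inner{f_{uuv}}{f_v}$ drops out, and eliminating $\inner{f_u}{f_{uvv}}$ between the two relations yields the identity $\inner{f_{uu}}{f_{vv}}(q)=F_{uv}(q)-\tfrac12 E_{vv}(q)$.

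Substituting these seven identities directly into the expression in Proposition \ref{prop:general} produces, after term-by-term matching, the formula stated in the proposition. The main obstacle is precisely the bookkeeping for $\inner{f_{uu}}{f_{vv}}(q)$, which is the only quantity that cannot be read off from a single first derivative of $E$, $F$ or $G$ and requires cross-differentiating $E_v$ and $F_u$ to cancel the unwanted third-order jet term; every other ingredient is a routine application of differentiating the pseudometric coefficients combined with $f_v(q)=0$.
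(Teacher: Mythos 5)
Your proposal is correct and follows essentially the same route as the paper: substitute into Proposition \ref{prop:general} after expressing each inner product of derivatives of $f$ at $q$ in terms of $E,F,G$ and their derivatives, using $f_v(q)=0$ to kill the extraneous terms, with the key identity $\inner{f_{uu}}{f_{vv}}(q)=F_{uv}(q)-\tfrac12 E_{vv}(q)$ obtained exactly as in the paper by comparing $F_{uv}$ with $E_{vv}$. The only cosmetic difference is that you spell out all seven substitutions, whereas the paper records only the delicate one and leaves the rest as direct calculation.
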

\begin{proof}
The formula follows by direct calculation and substitution in the formula of Proposition \ref{prop:general}. For instance, $E_{vv}=2\inner{f_u}{f_{uvv}}+2\inner{f_{uv}}{f_{uv}}$ and $F_{uv}=\inner{f_v}{f_{uuv}}+\inner{f_{uu}}{f_{vv}}+\inner{f_{uv}}{f_{uv}}+\inner{f_u}{f_{uvv}}$, so taking into account that in this coordinate system $f_v=0$ we get $\inner{f_{uu}}{f_{vv}}=F_{uv}-\frac{E_{vv}}{2}$.
\end{proof}

\section{The axial curvature for frontals: relation to the singular curvature}\label{frontals}

In this section we will show that the axial curvature is a generalization of the singular curvature for frontals.

A map-germ
$f:(\mathbb R^2,0)\rightarrow (\mathbb R^3,0)$
is a {\it frontal\/} if
there exists a well defined normal unit vector field $\nu$ along $f$,
namely, $|\nu|=1$ and for any $X\in T_q\R^2$, $df_q(X)\cdot\nu(q)=0$.
A frontal $f$ with a normal unit vector field $\nu$
is a {\it front} if the pair $(f,\nu)$ is an immersion.
Since at a cuspidal edge $f:(\mathbb R^2,0)\rightarrow (\mathbb R^3,0)$,
there is always a well defined normal unit vector field $\nu$ along $f$,
and the pair $(f,\nu)$ is an immersion, a cuspidal edge is a front.
On the other hand,
at a cuspidal cross-cap $f:(\mathbb R^2,0)\rightarrow (\mathbb R^3,0)$,
there is always a well defined normal unit vector field $\nu$ along $f$,
but the pair $(f,\nu)$ is not an immersion,
a cuspidal cross-cap is a frontal but not a front.
Let $f:(\mathbb R^2,0)\rightarrow (\mathbb R^3,0)$ be a frontal
with a normal unit vector field $\nu$.
Consider the function $\lambda=\det(f_x,f_y,\nu)$, where $(x,y)$ are the coordinates of $\mathbb R^2$.
Then $S(f)=\{\lambda^{-1}(0)\}$, where $S(f)$ is the set of singular
point of $f$.
A singular point $q$ is non-degenerate if $d\lambda(q)\neq 0$.
If $q$ is a non-degenerate singular point,
there is a well defined vector field $\eta$ in $\mathbb R^2$,
such that $df(\eta)=0$ on $S(f)$.
Such a vector field is called a {\it null vector field}.
A singular point $q$ is called of {\it first kind\/}
if $\eta\lambda(q)\ne0$.
A singular point $q$ is of first kind of a front if $f$
is a cuspidal edge (\cite{kokubuetal}).

Let $f:(\mathbb R^2,0)\rightarrow (\mathbb R^3,0)$ be a frontal
with a normal unit vector field $\nu$, and
$0$ a singular point of the first kind.
Since $\eta$ is transversal to $S(f)$, we can consider another vector field $\xi$ which is tangent to $S(f)$ and such that $(\xi,\eta)$ is positively oriented. Such a pair of vector fields is called an adapted pair. An adapted coordinate system $(u,v)$ of $\mathbb R^2$ is a coordinate system such that $S(f)$ is the $u$-axis, $\partial_v$ is the null vector field and there are no singular points besides the $u$-axis. Let $\gamma$ be a parametrisation of the singular curve $S(f)$ and let $\widehat\gamma=f\circ\gamma$. If $(u,v)$ is an adapted coordinate
system, then $f_{uv}=0$ holds on $S(f)$ and $\{f_u,f_{vv},\nu\}$ is linearly independent (in particular $f_{vv}(q)\neq 0$).

In \cite{martinssaji1} certain geometric invariants of cuspidal edges are studied. Amongst them are the singular curvature and the limiting normal curvature ($\kappa_s$ and $\kappa_{\nu}$), and these are given as follows:
\begin{equation}\label{eq:invdef1}
\kappa_s(t)=
\displaystyle
\operatorname{sgn}(d\lambda(\eta))\frac
{\det(\widehat\gamma'(t),\widehat\gamma''(t),\nu(\gamma(t)))}
{|\widehat\gamma'(t)|^3},\quad
\kappa_{\nu}(t)=
\displaystyle
\frac{\langle \widehat\gamma''(t),\nu(\gamma(t))\rangle}
{|\widehat\gamma'(t)|^2},
\end{equation}

A detailed description and geometrical interpretation of $\kappa_s$ and $\kappa_{\nu}$ can be found in \cite{sajietalannals}. In that paper, it is also shown that if $(u,v)$ is an adapted coordinate system, then $$\kappa_s(u,0)=sgn(\lambda_v)\frac{\det(f_u,f_{uu},\nu)}{|f_u|^3}(u,0).$$

There is a strong relation between the limiting normal curvature and the umbilic curvature, in fact, $\kappa_u$ is a generalization of $\kappa_{\nu}$ for non frontal singularities different from a cross-cap.

\begin{teo}[\cite{martinssaji1}]\label{kukn}
Let $f:(\mathbb R^2,q)\rightarrow (\mathbb R^3,p)$ be a map-germ, $q$ a cuspidal edge, and $\nu$ a
unit normal vector field along $f$. Then the following hold:
\begin{enumerate}
\item[i)] $\nu(q)$ is orthogonal to the line that contains $\Delta_p$ (i.e. $\nu=\nu_2$ of the adapted frame of $N_pM$).
\item[ii)] $\kappa_u(p)=|\kappa_{\nu}(q)|$
\item[iii)] $\kappa_s(q)=0$ if and only if $II(X,X)$ is parallel to $\nu$ at $p$, where $X$ is a non-zero tangent vector to $S(f)$ at $q$.
\item[iv)] $\kappa_s(q)=0=\kappa_u(p)$ if and only if $II(X,X)=0$ where $X$ is a non-zero tangent vector to $S(f)$ at $q$.
\end{enumerate}
\end{teo}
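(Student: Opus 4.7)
My plan is to work in an adapted coordinate system $(u,v)$ at the cuspidal edge point $q$, so that $S(f)=\{v=0\}$, $\partial_v$ is a null vector field, $f_v(u,0)=0$, and $f_{uv}=0$ along $S(f)$. Then $T_pM$ is spanned by $f_u(q)$, the singular curve admits the parametrisation $\widehat\gamma(u)=f(u,0)$ with $\widehat\gamma'=f_u$ and $\widehat\gamma''=f_{uu}$ on $S(f)$, and by \eqref{parabola} the half-line $\Delta_p$ is directed along $f_{vv}^\perp(q)$. For part (i), the unit normal $\nu$ satisfies $\langle\nu,f_u\rangle\equiv\langle\nu,f_v\rangle\equiv 0$; differentiating the second identity with respect to $v$ and evaluating at $v=0$ (where $f_v$ vanishes) gives $\langle\nu,f_{vv}\rangle(q)=0$. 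Hence $\nu$ is orthogonal to both $f_u(q)$ and $f_{vv}^\perp(q)$, so orthogonal to the line containing $\Delta_p$, which identifies $\nu$ with $\nu_2$ up to sign.

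For (ii), take $X=\partial_u/|f_u(q)|\in C_q$. Then $II(X,X)=f_{uu}^\perp(q)/|f_u(q)|^2$, and since $\nu\perp f_u$ one has $\langle II(X,X),\nu\rangle=\langle f_{uu},\nu\rangle/|f_u|^2$, which by \eqref{eq:invdef1} is precisely $\kappa_\nu(q)$; using (i) we get $\kappa_u(p)=|II_{\nu_2}(X,X)|=|\kappa_\nu(q)|$. For (iii), $\kappa_s$ vanishes iff $\det(f_u,f_{uu},\nu)=0$, which since $\nu\perp f_u$ happens exactly when $f_{uu}$ lies in the plane $\operatorname{span}(f_u,\nu)$, equivalently when $f_{uu}^\perp\parallel\nu$. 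With $X$ as above tangent to $S(f)$, this is precisely $II(X,X)\parallel\nu$. For (iv), decompose $II(X,X)=II_{\nu_1}(X,X)\nu_1+II_{\nu_2}(X,X)\nu_2$: by (ii), $\kappa_u=0$ is equivalent to $II_{\nu_2}(X,X)=0$; by (iii), $\kappa_s=0$ is equivalent to $II_{\nu_1}(X,X)=0$; so both hold simultaneously iff $II(X,X)=0$.

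The one point requiring care is the compatibility of sign conventions: the adapted frame is only determined up to replacing $\nu_2$ by $-\nu_2$, and the factor $\operatorname{sgn}(\lambda_v)$ in the definition of $\kappa_s$ is essentially arbitrary. However, all conclusions of the theorem (absolute values in (ii), parallelism and simultaneous vanishing in (iii) and (iv)) are sign-invariant, so these ambiguities do not cause trouble. One should also verify that an adapted coordinate system with all the listed properties actually exists at a cuspidal edge of first kind, which is standard and implicit in the discussion of frontals preceding the theorem.
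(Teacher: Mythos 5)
Your argument is correct. Note, however, that the paper does not prove this statement at all: Theorem \ref{kukn} is imported verbatim from \cite{martinssaji1} as a known result, so there is no in-paper proof to compare against. Your computation in an adapted coordinate system (using $f_v(u,0)\equiv 0$ to get $f_{uv}=0$ on $S(f)$ and $\langle\nu,f_{vv}\rangle(q)=0$, so that $\Delta_p$ is the half-line $\frac{1}{E}f_{uu}^{\perp}+y^2f_{vv}^{\perp}$ and $\nu=\pm\nu_2$) is exactly the style of argument the authors themselves use later, e.g.\ in the proof of Theorem \ref{axialsingular}, and it matches how the cited reference establishes the result; your remarks on the sign ambiguities of $\nu_2$ and $\operatorname{sgn}(\lambda_v)$ correctly identify why only absolute values and parallelism statements appear in the conclusion.
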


In the same spirit there is a strong relation between the axial curvature and the singular curvature:

\begin{teo}\label{axialsingular}
Let $f:(\mathbb R^2,q)\rightarrow (\mathbb R^3,p)$ be a map-germ, $q$ a non-degenerate frontal singularity, and $\nu$ a
unit normal vector field along $f$. Then the following hold:
\begin{itemize}
\item[i)] $\nu(q)$ is orthogonal to $v_a$
\item[ii)] $\kappa_a(p)=0$ if and only if $\Delta_p$ is a point, or $II(X,X)$ is parallel to $\nu$ at $p$, where $X$ is a non-zero tangent vector to $S(f)$ at $q$.
\item[iii)] $|\kappa_a(p)|=|\kappa_s(q)|$.
\end{itemize}
\end{teo}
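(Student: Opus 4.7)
The plan is to work in an adapted coordinate system $(u,v)$ for the non-degenerate frontal singularity, so that $S(f)$ is the $u$-axis, $\partial_v$ is the null vector field, and (as recalled in the text) $f_v(q)=0$, $f_{uv}(q)=0$, and $\{f_u,f_{vv},\nu\}$ is linearly independent at $q$. The tangent direction to $S(f)$ at $q$ is $\partial_u$, so for $X=\partial_u$ one has $II(X,X)=f_{uu}^\perp(q)$.

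For part (i), I would differentiate the identity $\langle\nu,f_v\rangle\equiv 0$ with respect to $v$ to get $\langle\nu_v,f_v\rangle+\langle\nu,f_{vv}\rangle=0$; since $f_v(q)=0$, this yields $\langle\nu,f_{vv}\rangle(q)=0$. Because $T_pM=\langle f_u\rangle$ is automatically orthogonal to $\nu$, the projection $f_{vv}^\perp=f_{vv}-\frac{\langle f_{vv},f_u\rangle}{E}f_u$ of $f_{vv}$ onto $N_pM$ is still orthogonal to $\nu$, and it is nonzero by the independence of $\{f_u,f_{vv},\nu\}$. Using $f_{uv}^\perp(q)=0$, the parametrisation (\ref{parabola}) collapses to $\eta(y)=\frac{1}{E}f_{uu}^\perp+y^2 f_{vv}^\perp$, a half-line with direction $f_{vv}^\perp$. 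By definition $v_a=\eta(y_\infty)$ is then the unit vector along $f_{vv}^\perp$, so $v_a\perp\nu$, settling (i).

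For parts (ii) and (iii), fix the adapted frame $\{v_a,\nu_2\}$ of $N_pM$; by (i) we have $\nu_2=\pm\nu$. The axial normal curvature function becomes $K_{v_a}(y)=\frac{1}{E}\langle f_{uu},v_a\rangle+|f_{vv}^\perp|\,y^2$, so its minimum is at $y_0=0$ and $\kappa_a(p)=\frac{1}{E}\langle f_{uu}(q),v_a\rangle$. Hence $\kappa_a(p)=0$ iff $f_{uu}^\perp$ has no $v_a$-component iff $f_{uu}^\perp\parallel\nu$, which combined with Proposition \ref{zero} is exactly (ii). For (iii), I would decompose $f_{uu}(q)$ in the mutually orthogonal frame $\{f_u,v_a,\nu\}$; by multilinearity only the $v_a$-component survives in $\det(f_u,f_{uu},\nu)$, and since $\{f_u/\sqrt{E},v_a,\nu\}$ is orthonormal, $|\det(f_u,f_{uu},\nu)(q)|=\sqrt{E}\,|\langle f_{uu},v_a\rangle|$. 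Substituting into (\ref{eq:invdef1}) with $|f_u|^3=E^{3/2}$ gives $|\kappa_s(q)|=|\langle f_{uu},v_a\rangle|/E=|\kappa_a(p)|$.

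The main obstacle is the identification in (i) of the intrinsic axial vector $v_a$ with the coordinate vector $f_{vv}^\perp/|f_{vv}^\perp|$: one needs the adapted coordinates together with the $v$-derivative trick to kill both the $f_u$ and $\nu$ components of $f_{vv}$, and non-degeneracy to guarantee $f_{vv}^\perp\neq 0$ so that $v_a$ is well-defined and the minimum of $K_{v_a}$ is attained at a finite parameter. Once (i) is in place, (ii) and (iii) reduce to short inner-product and scalar triple product calculations.
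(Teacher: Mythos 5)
Your proof is correct and follows essentially the same route as the paper's: adapted coordinates kill $f_{uv}^\perp$ so the curvature parabola is $\frac{1}{E}f_{uu}^\perp+y^2f_{vv}^\perp$, the minimum of $K_{v_a}$ sits at $y=0$ (the direction tangent to $S(f)$), and the scalar triple product computation identifies $\kappa_a$ with $\pm\kappa_s$. The only divergence is in part (i), where the paper invokes Theorem \ref{kukn}(i) from \cite{martinssaji1} to get $\nu=\nu_2$, whereas you rederive $\langle\nu,f_{vv}\rangle(q)=0$ directly by differentiating $\langle\nu,f_v\rangle\equiv0$; this is a self-contained justification of the same fact and changes nothing essential.
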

\begin{proof}
From the definition of $v_a$, for the particular case of frontals, which have degenerate curvature parabola, $v_a=\nu_1$ where $\{\nu_1,\nu_2\}$ is an adapted frame of $N_pM$. From item i) in Theorem \ref{kukn}, $\nu(q)=\nu_2$, so $v_a$ is orthogonal to $\nu(q)$.

When $\Delta_p$ is a point, $\kappa_a=0$ by definition. If $\Delta_p$ is a line, then $\kappa_a$ is not bounded and item ii) does not apply. If $\Delta_p$ is a half-line then the minimum of $K_{v_a}$ is attained at the point where $\eta'(y)=0$. We consider an adapted coordinate system $(u,v)$. Recall that this implies that $f_{uv}=0$ holds on $S(f)$ and $f_{vv}(q)\neq 0$. The curvature parabola is the image of $C_q$ by the second fundamental form. We have $II(\partial u+y\partial v,\partial u+y\partial v)=f_{uu}^{\perp}(q)+2yf_{uv}^{\perp}(q)+y^2f_{vv}^{\perp}(q)=f_{uu}^{\perp}(q)+y^2f_{vv}^{\perp}(q)$, so $\eta'(y)=2yf_{vv}^{\perp}(q)$, which is 0 if and only if $y=0$. Therefore, the unitary tangent direction $X$ for which $II(X,X)$ is the extremal point of the half-line is $\partial_u$, which is tangent to $S(f)$ in the adapted coordinate system. On the other hand $\kappa_a(p)=\min\{K_{v_a}(X):X\in C_q\}=\langle \frac{II(\partial_u,\partial_u)}{I(\partial_u,\partial_u)},v_a\rangle=\langle \frac{II(\partial_u,\partial_u)}{E},v_a\rangle$, since $\partial u$ is the direction for which $K_{v_a}$ is minimum. Item ii) follows form $$\frac{1}{E}II(\partial_u,\partial_u)=\langle \frac{II(\partial_u,\partial_u)}{E},\nu\rangle\nu+\langle \frac{II(\partial_u,\partial_u)}{E},v_a\rangle v_a=\kappa_u\nu+\kappa_a v_a.$$

Now, for an adapted coordinate system we have $$\kappa_s(u,0)=sgn(\lambda_v)\frac{\det(f_u,f_{uu},\nu)}{|f_u|^3}(u,0)=sgn(\lambda_v)\langle \frac{f_{uu}^{\perp}}{|f_u|^2},\nu\times\frac{f_u}{|f_u|}\rangle.$$ From i) $\nu\times\frac{f_u}{|f_u|}=v_a$ so the above equation is equal to $$sgn(\lambda_v)\langle \frac{f_{uu}^{\perp}}{E},v_a\rangle=sgn(\lambda_v)\langle \frac{II(\partial_u,\partial_u)}{I(\partial_u,\partial_u)},v_a\rangle=sgn(\lambda_v)\kappa_a(p).$$

\end{proof}

\begin{rem}
An alternative way to prove iii) in Theorem \ref{axialsingular} is using the formula \eqref{eq:kaspecial} since when $f$ is a frontal,
$$
\nu=f_u\times f_{vv}/|f_u\times f_{vv}|.
$$
So by direct calculation, $\kappa_a$ is equal to $\kappa_s$ in the case non-degenerate frontals.
\end{rem}

By Corollary 1.14 of \cite{sajietalannals}, for non-degenerate front singularities of the second kind (swallowtail), the singular curvature is unbounded. In fact, this is a corollary of the above Theorem too since the $2$-jet of such a singularity is $\mathscr A$-equivalent to $(x,xy,0)$ and the curvature parabola is a line, so the minimum of the projection to $v_a$ (i.e. $\kappa_a$) is unbounded.

\begin{rem}
In \cite{teramoto} the author defines some principal curvatures for wave fronts and in Theorem 3.1 he proves that when the singularity is of first or second kind then one of these principal curvatures is bounded and in fact is equal to $\kappa_{\nu}$. By Theorem \ref{kukn} $\kappa_u=\kappa_{\nu}$ and $\nu=\nu_2$ of the adapted frame of $N_pM$, so $\kappa_u$ can be seen as a $\nu_2$-principal curvature for $M$. $\kappa_u$ is the projection on the direction given by $\nu_2$ and $\kappa_a$ is the extremal point of the projection on the direction given by $v_a$, so it makes sense to consider $\kappa_a$ as a kind of $v_a$-principal curvature for wave fronts, which is consistent with the interpretation given in Remark \ref{principal}.
\end{rem}

\begin{rem}
If $f$ is given in Monge form and $\Delta_p$ is not a line or a non-degenerate parabola (when $\Delta_p$ is a line, $\kappa_a$ is unbounded and when $\Delta_p$ is a non-degenerate parabola, $\kappa_u$ is not defined), then $a_{11}^2+b_{11}^2=0$ and $\kappa_a^2+\kappa_u^2=a_{20}^2+b_{20}^2$. This corresponds to the curvature of the curve $\gamma(t)=f(t,0)$. For the case of frontals in an adapted coordinate system this curve is the cuspidal edge and its curvature $\kappa$ satisfies $\kappa^2=\kappa_a^2+\kappa_u^2$ (see \cite{martinssaji1}).
\end{rem}

\section{Contact with planes}

In this section we consider the contact of $M$ with the plane orthogonal to $v_a$, which we denote by $v_a^{\perp}$. The contact of $M$ with a plane orthogonal to a vector $v$ is measured by the singularities of the height function in the direction $v$ $$h_v:M\rightarrow\mathbb{R},\ h_v(p)=\langle p,v\rangle.$$ We study the height function in the direction $v_a$ and obtain geometric interpretations for the axial curvature.

We first show the following lemma.
\begin{lem}
If $\Delta_p$ is a non-degenerate parabola or a half-line
$($i.e. $j^2f$ is equivalent to $(x,y^2,xy)$ or $(x,y^2,0))$,
then for a coordinate system as in Proposition \ref{prop:general},
$v_a=\frac{(f_u\times f_{vv})\times f_u}{|(f_u\times f_{vv})\times f_u|}(q)$, and in the coordinate system of Lemma \ref{lem:special}, $v_a=f_{vv}(q)$.
\end{lem}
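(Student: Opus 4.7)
The plan is to identify $v_a$ with the unit vector in $N_pM$ pointing in the direction of $f_{vv}^\perp(q)$, the component of $f_{vv}(q)$ orthogonal to the tangent line at $p$. First, I would simplify $(f_u\times f_{vv})\times f_u$ by the vector triple product identity: it equals $|f_u|^2 f_{vv}-\inner{f_u}{f_{vv}}\,f_u$, which is $|f_u|^2$ times the orthogonal projection of $f_{vv}$ onto $(\operatorname{span}\,f_u)^{\perp}$. Because $f_v(q)=0$, the tangent line $T_pM$ is spanned by $f_u(q)$, and its orthogonal complement in $\R^3$ is exactly $N_pM$; hence the normalized expression on the right hand side, evaluated at $q$, is a well-defined unit vector in $N_pM$ parallel to $f_{vv}^\perp(q)$.

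Next, I would show that this unit vector coincides with $v_a$. Under the hypothesis, \eqref{parabola} specialises to $\eta(y)=f_{uu}^\perp(q)+2y\,f_{uv}^\perp(q)+y^2 f_{vv}^\perp(q)$, whose quadratic coefficient is $f_{vv}^\perp(q)$. For a non-degenerate parabola the axis of symmetry of the image is parallel to the quadratic coefficient, while for a half-line, part (b) of Theorem \ref{conditionsparabola} forces $f_{uv}^\perp(q)$ and $f_{vv}^\perp(q)$ to be parallel, so $v_a=\eta(y_\infty)$ is the unit vector along $f_{vv}^\perp(q)$. In either case $v_a=\pm f_{vv}^\perp(q)/|f_{vv}^\perp(q)|$. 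To pin down the sign I would compare with the Monge form of Proposition \ref{formula}: there $f_u(0)=(1,0,0)$, $f_v(0)=0$ and $f_{vv}(0)=(0,a_{02},b_{02})$ already lies in $N_pM$, while Proposition \ref{formula} states $v_a=(0,a_{02},b_{02})/\sqrt{a_{02}^2+b_{02}^2}$, matching $f_{vv}^\perp(0)/|f_{vv}^\perp(0)|$ exactly. Since both sides of the claimed identity are coordinate-free geometric objects, agreement in the Monge form forces agreement in every coordinate system satisfying $f_u(q)\neq 0$ and $f_v(q)=0$.

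For the second claim, the coordinate system of Lemma \ref{lem:special} additionally satisfies $\inner{f_u(q)}{f_{vv}(q)}=0$ and $|f_{vv}(q)|=1$, so $f_{vv}^\perp(q)=f_{vv}(q)$ is already a unit vector in $N_pM$, giving $v_a=f_{vv}(q)$. The main obstacle is the orientation/sign check in the middle step — ensuring that $v_a$ points towards the interior of the parabola rather than in the opposite direction — and this is resolved cleanly by the explicit comparison with the Monge normal form, where all quantities are written out coordinate-wise.
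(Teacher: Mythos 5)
Your proposal is correct and takes essentially the same route as the paper's proof: both arguments amount to observing that $(f_u\times f_{vv})\times f_u$ is, up to a positive scalar, the orthogonal projection of $f_{vv}(q)$ onto $N_pM$ (you via the triple product identity, the paper via the orthonormal frame $V_1=f_u\times f_{vv}/|f_u\times f_{vv}|$, $V_2$ and the vanishing of $\inner{f_{vv}}{V_1}$), and that this projection is the quadratic coefficient of the parametrisation of $\Delta_p$, hence gives the direction of the axis. Your explicit handling of the half-line case via Theorem~\ref{conditionsparabola}(b) and of the sign via the Monge form of Proposition~\ref{formula} supplies details the paper leaves implicit, and is consistent with it.
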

\begin{proof}
Let $(u,v)$ be a coordinate system as in Proposition \ref{prop:general}.
Since $j^2f$ is equivalent to $(x,y^2,xy)$ or $(x,y^2,0)$, $f_u\times f_{vv}\ne0$ at $q$,
and
$
N_pM=\langle
V_1,\ V_2
\rangle_{\R}
$,
where
$$
V_1=\dfrac{f_u(q)\times f_{vv}(q)}{|f_u(q)\times f_{vv}(q)|},\quad
V_2=\dfrac{(f_u(q)\times f_{vv}(q))\times f_{u}(q)}
{|(f_u(q)\times f_{vv}(q))\times f_{u}(q)|}.
$$
Since $f_v(q)=0$, the condition
$I(a\partial_u+b\partial_v,a\partial_u+b\partial_v)=1$
is equivalent to $a=1/\inner{f_u(q)}{f_u(q)}^{1/2}$.
Thus the curvature parabola is
\begin{align*}
&\Bigg\{
\bigg(\dfrac{\inner{f_{uu}(q)}{V_1}}{E(q)}
+\dfrac{2b\inner{f_{uv}(q)}{V_1}}{E(q)^{1/2}}
+b^2\inner{f_{vv}(q)}{V_1}\bigg)V_1\\
&\hspace{20mm}+
\bigg(\dfrac{\inner{f_{uu}(q)}{V_2}}{E(q)}
+\dfrac{2b\inner{f_{uv}(q)}{V_2}}{E(q)^{1/2}}
+b^2\inner{f_{vv}(q)}{V_2}\bigg)V_2\Bigg|
b\in\R\Bigg\}.
\end{align*}
Since $\inner{f_{vv}(q)}{V_1}=0$,
there is no $b^2$-term in the coefficient of $V_1$.
Therefore
the curvature parabola is a parabola whose axis is
parallel to the direction of $V_2$, and
we obtain
$$
v_a=V_2=
\dfrac{(f_u(q)\times f_{vv}(q))\times f_{u}(q)}
{|(f_u(q)\times f_{vv}(q))\times f_{u}(q)|}.
$$
Thus the second assertion is shown.
The first assertion immediately follows from the second assertion.
\end{proof}

\begin{prop}\label{a1}
If $f$ satisfies that $\Delta_p$ is a non-degenerate parabola or a half-line, the singularities of $h_{v_a}$, the height function in the direction $v_a$, are
\begin{itemize}
\item[i)] $A_1^+$ if and only if $\kappa_a(p)>0$,
\item[ii)] $A_1^-$ if and only if $\kappa_a(p)<0$,
\item[iii)] $A_{\geq 2}$ if and only if $\kappa_a(p)=0$. In particular, $A_2$ if and only if $\kappa_a=0$ and
\begin{align*}
&\Big(-\inner{f_{uuu}}{f_{vv}}
+3\inner{f_{uuv}}{f_{vv}}\inner{f_{uv}}{f_{vv}}\\
&\hspace{10mm}
-3\inner{f_{uvv}}{f_{vv}}\inner{f_{uv}}{f_{vv}}^2
+\inner{f_{vvv}}{f_{vv}}\inner{f_{uv}}{f_{vv}}^3\Big)(q)\neq0
\end{align*}
\end{itemize}
\end{prop}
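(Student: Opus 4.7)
My plan is to work in the special coordinate system of Lemma~\ref{lem:special}, since by the lemma that immediately precedes Proposition~\ref{a1} we have $v_a = f_{vv}(q)$ in that system, which trivialises every inner product that appears below.

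The first step is to note that $q$ is automatically a critical point of $h_{v_a}\circ f$: because $v_a\in N_pM$, it is orthogonal to $df_q(T_q\mathbb R^2)$. I would then write out the Hessian at $q$,
\[
H=\begin{pmatrix} \inner{f_{uu}}{f_{vv}} & \inner{f_{uv}}{f_{vv}} \\ \inner{f_{uv}}{f_{vv}} & 1\end{pmatrix},
\]
and observe that $\det H$ is exactly the expression for $\kappa_a$ supplied by~\eqref{eq:kaspecial}. Since the $(2,2)$-entry equals $1>0$, Sylvester's criterion delivers parts (i)--(iii) at once: $\kappa_a>0$ iff $H$ is positive definite, and the singularity is $A_1^+$; $\kappa_a<0$ iff $H$ is indefinite, and the singularity is $A_1^-$; $\kappa_a=0$ is the precise degeneracy condition, $A_{\geq 2}$.

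For the $A_2$ refinement I would invoke the splitting lemma: for a function $\mathbb R^2\to\mathbb R$ with rank-one Hessian, the singularity is $A_2$ exactly when the third directional derivative along $\ker H$ is non-zero. When $\kappa_a=0$ the kernel of $H$ is spanned by $X=\partial_u - m\,\partial_v$ with $m=\inner{f_{uv}}{f_{vv}}(q)$, and a multinomial expansion gives
\[
\partial_X^3(h_{v_a}\circ f)(q) = \inner{f_{uuu} - 3m\, f_{uuv} + 3m^2\, f_{uvv} - m^3\, f_{vvv}}{f_{vv}}(q).
\]
Substituting $m=\inner{f_{uv}}{f_{vv}}(q)$ on the right-hand side recovers, up to an overall sign, the displayed expression in part (iii), so the stated inequality is exactly the $A_2$ criterion.

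The main obstacle is conceptual rather than algebraic: one has to cite the right pieces of singularity theory, namely Morse's lemma for the non-degenerate cases and the splitting lemma together with the $A_2$ criterion via the third derivative along the Hessian kernel for the degenerate case. All the geometric content is absorbed into Lemma~\ref{lem:special}, which reduces $\kappa_a$, the kernel of $H$, and the $A_2$ condition alike to inner products with $f_{vv}(q)=v_a$.
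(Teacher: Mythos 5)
Your argument is correct and follows essentially the same route as the paper's proof: the same adapted coordinate system from Lemma~\ref{lem:special} with $v_a=f_{vv}(q)$, the same Hessian matrix whose determinant is identified with $\kappa_a(p)$ via \eqref{eq:kaspecial}, and the same kernel vector in the degenerate case. The only cosmetic difference is that you detect $A_2$ by the third directional derivative of $h_{v_a}$ along $\ker H$, whereas the paper differentiates $\det\mathcal H$ along that direction; the two criteria are equivalent here and produce the same displayed expression up to an overall sign.
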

\begin{proof}
If a coordinate system $(u,v)$ satisfies
$f_u(q)\ne0$, $f_v(q)=0$,
$|f_u(q)|=|f_{vv}(q)|=1$ and
$\inner{f_u(q)}{f_{vv}(q)}=0$,
then, by Lemma \ref{lem:special}, $$\kappa_a(p)=
\Big(\inner{f_{uu}}{f_{vv}}-\inner{f_{uv}}{f_{vv}}^2\Big)(q).$$
Since $\inner{f_{vv}}{f_{vv}}(q)=1$,
the Hessian matrix of $h_{v_a}$ at $q$ is
$$
{\mathcal H}=
\begin{pmatrix}
\inner{f_{uu}}{f_{vv}}&\inner{f_{uv}}{f_{vv}}\\
\inner{f_{uv}}{f_{vv}}&1
\end{pmatrix}(q).
$$
Notice that, $\det{\mathcal H}$ is precisely $\kappa_a(p)$.
Thus, assertions i) and ii) are shown.

We assume $\det {\mathcal H}=0$. Since $\inner{f_{vv}}{f_{vv}}(q)\ne0$,
${\rm rank} {\mathcal H}=1$.
We set $k_H=(-1,\inner{f_{uv}}{f_{vv}})$.
Then $k_H$ spans the kernel of ${\mathcal H}$.
It is known that $h_{v_a}$ is an $A_2$-singularity at $p$ if and only if
$$
\bigg(
\Big(-\partial_u
+\inner{f_{uv}}{f_{vv}}(q)\partial_v\Big)\det {\mathcal H}\bigg)(q)\ne0
$$
By a straightforward calculation, it is equivalent to
\begin{align*}
&-\inner{f_{uuu}}{f_{vv}}
+3\inner{f_{uuv}}{f_{vv}}\inner{f_{uv}}{f_{vv}}
-\inner{f_{uvv}}{f_{vv}}\inner{f_{uu}}{f_{vv}}\\
&\hspace{30mm}
-2\inner{f_{uvv}}{f_{vv}}\inner{f_{uv}}{f_{vv}}^2
+\inner{f_{vvv}}{f_{vv}}\inner{f_{uv}}{f_{vv}}\inner{f_{uu}}{f_{vv}}\neq0
\end{align*}
at $q$.
By the assumption $\kappa_a=\inner{f_{uu}}{f_{vv}}-\inner{f_{uv}}{f_{vv}}^2=0$, so
we get assertion iii).
\end{proof}

\begin{coro}\label{side}
If $f$ satisfies that $\Delta_p$ is a non-degenerate parabola or a half-line, then the surface $M$ is (locally) only on one side of the osculating plane if and only if $\kappa_a>0$
\end{coro}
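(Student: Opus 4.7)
The plan is to read this off directly from Proposition \ref{a1}. The osculating plane is by definition the plane $v_a^\perp$ through $p$, and the signed distance from $M$ to this plane is given by the height function $h_{v_a}\circ f$. Thus ``$M$ lies locally on one side of $v_a^\perp$'' is equivalent to ``$h_{v_a}\circ f$ has constant sign on some punctured neighborhood of $q$''.

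For the \emph{if} direction: assume $\kappa_a(p)>0$. By Proposition \ref{a1}(i), $h_{v_a}$ has an $A_1^+$ singularity at $q$. This is a definite Morse critical point, so in suitable local coordinates $h_{v_a}\circ f$ is equivalent to $u^2+v^2$; in particular $h_{v_a}\circ f > 0$ in a punctured neighborhood of $q$, and $M$ lies strictly on one side of $v_a^\perp$.

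For the \emph{only if} direction: suppose $\kappa_a(p)\le 0$. If $\kappa_a(p)<0$, Proposition \ref{a1}(ii) says $h_{v_a}$ has an $A_1^-$ singularity, whose normal form $u^2-v^2$ takes both signs in every neighborhood of $q$, so $M$ crosses the osculating plane. If $\kappa_a(p)=0$, we are in case (iii) of Proposition \ref{a1}, so the Hessian ${\mathcal H}$ of $h_{v_a}$ at $q$ is degenerate. However, since $\inner{f_{vv}}{f_{vv}}(q)=1$, ${\mathcal H}$ has rank $1$ with a positive and a zero eigenvalue. By the splitting lemma one may write $h_{v_a}\circ f=\tilde u^2+g(\tilde v)$, with $g$ having vanishing quadratic part; generically (the $A_2$ case singled out in Proposition \ref{a1}(iii)) $g$ has leading cubic term and hence changes sign, so $M$ again lies on both sides of $v_a^\perp$.

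The only real obstacle is the treatment of the $\kappa_a=0$ case, where strictly speaking non-generic $A_{2k+1}^+$ situations could keep $h_{v_a}\circ f$ one-signed; the corollary is understood with respect to the generic ($A_{\le 2}$) behaviour arising from Proposition \ref{a1}, in which the dichotomy between $A_1^+$ and the remaining cases is exactly the sign of $\kappa_a$.
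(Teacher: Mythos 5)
Your proof is correct and follows the same route the paper intends: the corollary is stated without proof as an immediate consequence of Proposition \ref{a1}, via exactly the reading of the $A_1^{+}$/$A_1^{-}$ normal forms of $h_{v_a}$ (positive definite versus indefinite Hessian, with $\det\mathcal{H}=\kappa_a$) that you give. Your closing caveat about the $\kappa_a=0$ case --- where a non-generic $A_{2k+1}^{+}$ singularity such as $u^2+v^4$ would keep $h_{v_a}\circ f$ one-signed and thus contradict the stated ``only if'' --- points at a genuine subtlety the paper silently glosses over, so flagging it is a merit of your write-up rather than a defect.
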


\begin{ex}
Given a cuspidal edge $f(u,v)=(u,\frac{a_{20}}{2}u^2+\frac{v}{2},v^3)$, then $\kappa_a=\kappa_s=a_{20}$. When $a_{20}>0$ (resp. $a_{20}<0$) the cuspidal edge is positively curved (resp. negatively curved). See Figure \ref{cuspidaledges}.
\begin{figure}
\begin{center}
\includegraphics[width=0.6\linewidth]{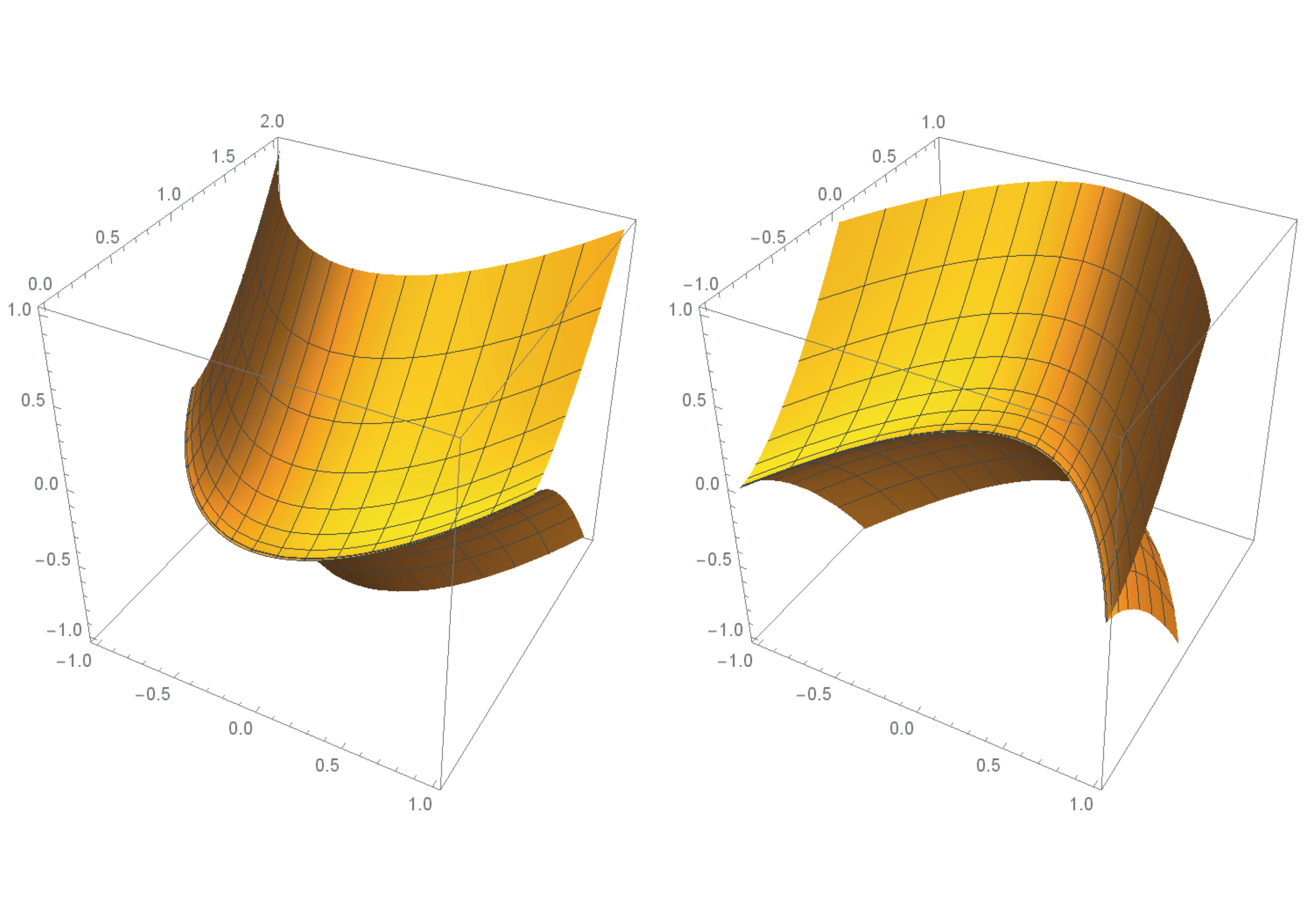}
\caption{The left one is positively curved and the right one is negatively curved.}\label{cuspidaledges}
\end{center}
\end{figure}
\end{ex}

\begin{prop}\label{binormal}
Suppose $\kappa_a(p)$ is defined (i.e. $\Delta_p$ is not a line), then $\kappa_a(p)=0$ if and only if $v_a$ is a binormal direction.
\end{prop}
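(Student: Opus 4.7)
The approach is to convert the equivalence into a statement about the strictly convex quadratic function $K_{v_a}(y)=\langle\eta(y),v_a\rangle$ together with a linear-algebraic characterization of binormal directions. I would handle the three shapes of $\Delta_p$ allowed by the hypothesis (non-degenerate parabola, half-line, point) separately.

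First, I would observe that in the non-degenerate parabola and half-line cases, the Monge form computation in Proposition \ref{formula} gives $v_a=\frac{1}{\sqrt{a_{02}^2+b_{02}^2}}(a_{02},b_{02})$, so $K_{v_a}(y)$ is a quadratic in $y$ with strictly positive leading coefficient $\sqrt{a_{02}^2+b_{02}^2}$. Hence it attains its minimum $\kappa_a(p)$ at a unique critical point $y_0$ characterized by $\langle\eta'(y_0),v_a\rangle=0$. A short computation with $X_y=\partial_u+y\partial_v$ yields
\[II(X_y,\partial_u)=\eta(y)-\tfrac{y}{2}\eta'(y),\qquad II(X_y,\partial_v)=\tfrac{1}{2}\eta'(y),\]
so the image of the map $Y\mapsto II(X_y,Y)$ equals $\operatorname{span}\{\eta(y),\eta'(y)\}\subset N_pM$. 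This rephrases ``$v_a$ is a binormal for the asymptotic direction $y$'' as the pair of conditions $v_a\perp\eta(y)$ and $v_a\perp\eta'(y)$.

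With this dictionary the forward implication is immediate: if $\kappa_a(p)=0$, then at $y_0$ one has $\langle\eta(y_0),v_a\rangle=0$ (minimum value) together with $\langle\eta'(y_0),v_a\rangle=0$ (criticality), so both vectors lie in the one-dimensional subspace $v_a^\perp=\operatorname{span}(\nu_2)$, hence are collinear; the lemma from \cite{MartinsBallesteros} then makes $y_0$ asymptotic, and $v_a$ becomes the corresponding binormal. Conversely, if $v_a$ is a binormal for some asymptotic $y$, the identities above give $\langle\eta'(y),v_a\rangle=0$, forcing $y=y_0$ by uniqueness of the critical point, and then $\kappa_a(p)=K_{v_a}(y_0)=\langle\eta(y_0),v_a\rangle=0$. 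The point case is trivial: $\kappa_a(p)=0$ by the geometric interpretation, and $II(\partial_v,\cdot)\equiv 0$ in Monge form shows that $\partial_v$ is asymptotic with every normal vector, $v_a$ included, as a binormal.

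The main place where I expect care to be needed is the half-line case, where $\eta'(y_0)=0$ at the vertex and so $\operatorname{span}\{\eta(y_0),\eta'(y_0)\}$ collapses to $\operatorname{span}\{\eta(y_0)\}$ (or to $\{0\}$ when $\eta(y_0)=0$). The argument still applies: ``$v_a$ binormal for $y_0$'' reduces there to the single condition $v_a\perp\eta(y_0)$, which is precisely $\kappa_a(p)=K_{v_a}(y_0)=0$, and in the degenerate sub-case every normal vector is automatically a binormal and $\kappa_a(p)=0$ by inspection.
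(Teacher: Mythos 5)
Your argument is sound and rests on the same mechanism as the paper's own proof: the unique critical point $y_0$ of the convex quadratic $K_{v_a}$, together with the fact that $v_a$ is a binormal for the direction $y$ exactly when $v_a$ is orthogonal to both $\eta(y)$ and $\eta'(y)$. The paper reaches this by citing Proposition~\ref{zero} and the collinearity lemma, and in fact only spells out the implication $\kappa_a(p)=0\Rightarrow v_a$ binormal; your identities $II(X_y,\partial_u)=\eta(y)-\tfrac{y}{2}\eta'(y)$ and $II(X_y,\partial_v)=\tfrac{1}{2}\eta'(y)$ (which are correct) make the binormality criterion explicit and let you run the converse cleanly, so on that point your write-up is actually more complete than the paper's. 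The one loose end is in that converse: when $\Delta_p$ is a half-line, the paper's conventions make $y_\infty$ (the null direction $\partial_v$) an asymptotic direction as well, and your dictionary, being phrased for $X_y=\partial_u+y\partial_v$ with $y$ finite, does not cover the possibility that $v_a$ is a binormal for $y_\infty$. This is dispatched in one line --- $II_{v_a}(\partial_v,\partial_v)=\langle f_{vv}^{\perp},v_a\rangle=|f_{vv}^{\perp}|\neq 0$ whenever $\Delta_p$ is a non-degenerate parabola or a half-line, because $v_a$ is by construction the unit vector in the direction of $f_{vv}^{\perp}=(0,a_{02},b_{02})$ --- but it needs to be said, since otherwise the case split over asymptotic directions is not exhaustive. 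With that sentence added the proof is complete.
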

\begin{proof}
By Proposition \ref{zero}, $\kappa_a(p)=0$ if and only if $\Delta_p$ is a point, or $\eta(y_0)$ is parallel to $\nu_2$, where $y_0$ is a critical point of $K_{v_a}$. When $\Delta_p$ is a point, all directions are asymptotic and by definition $v_a$ is orthogonal to $\eta(y)$ for any $y$, thus, $v_a$ is a binormal direction. When $\Delta_p$ is not a point, $\{v_a,\nu_2\}$ is an adapted frame. If $\eta(y_0)$ is parallel to $\nu_2$, and $y_0$ is a critical point of $K_{v_a}$, this means that $y_0$ is an asymptotic direction, and since $\eta(y_0)$ and $v_a$ are orthogonal, $v_a$ is a binormal direction.
\end{proof}

With this we can recover part of Theorem 2.15 in \cite{MartinsBallesteros} and give some more information:

\begin{prop}
$h_{v_a}$ has a degenerate singularity if and only if $v_a$ is a binormal direction. Moreover, the singularity is of corank 2 if and only if $\Delta_p$ is degenerate and $\kappa_a(p)=\kappa_u(p)=0$.
\end{prop}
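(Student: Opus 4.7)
The plan is to reduce both claims to a statement about the Hessian of $h_{v_a}\circ f$ at $q$. Using the corank-$1$ normal form with $f_v(q)=0$ and the fact that $v_a\in N_pM$ is perpendicular to $f_u(q)$, the point $q$ is automatically a critical point, and differentiating twice gives
\[
\mathcal H=\begin{pmatrix}\langle f_{uu},v_a\rangle & \langle f_{uv},v_a\rangle\\\langle f_{uv},v_a\rangle & \langle f_{vv},v_a\rangle\end{pmatrix}(q),
\]
which is precisely the matrix of the second fundamental form $II_{v_a}$ in the basis $\{\partial_u,\partial_v\}$. Thus the whole proposition becomes a statement about the rank of $II_{v_a}$ as a quadratic form.

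For the first claim, the singularity of $h_{v_a}$ is degenerate if and only if $\det\mathcal H=0$, equivalently $II_{v_a}$ is a degenerate bilinear form, which holds if and only if there exists a nonzero $X\in T_q\mathbb R^2$ with $II_{v_a}(X,Y)=0$ for all $Y$; by definition this says that $X$ is asymptotic with associated binormal $v_a$. In the cases where Proposition \ref{a1} applies (non-degenerate parabola and half-line) one can also argue more concretely by chaining Propositions \ref{a1} and \ref{binormal}: $h_{v_a}$ is degenerate iff $\kappa_a(p)=0$, and Proposition \ref{binormal} identifies this in turn with $v_a$ being a binormal direction. The remaining line and point cases I would verify directly from the Monge normal form using Theorem \ref{conditionsparabola}.

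For the corank $2$ claim, one has $\mathcal H=0$ if and only if $II_{v_a}\equiv 0$, equivalently $\eta(y)\cdot v_a\equiv 0$ for all $y$, which geometrically says that the entire curvature parabola lies in the line through the origin of $N_pM$ orthogonal to $v_a$. The strategy is to run through the four $\mathcal A^2$-classes of Theorem \ref{conditionsparabola} in Monge form: in cases (a) and (b) one has $v_a$ parallel to $(a_{02},b_{02})$, so $\langle f_{vv},v_a\rangle\ne 0$ and corank $2$ is impossible; in case (c) one has $v_a$ parallel to $(a_{11},b_{11})$, so $\langle f_{uv},v_a\rangle\ne 0$, again ruling out corank $2$; only case (d), where $\Delta_p$ is a point, allows all three Hessian entries to vanish, and corank $2$ in that case corresponds to $\Delta_p$ being exactly the origin of $N_pM$, which is the condition $\kappa_u(p)=0$ (and $\kappa_a(p)=0$ is automatic for a point). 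The main bookkeeping difficulty is this last case analysis, where one has to keep track of all three Hessian coefficients simultaneously and match them against the geometric conditions $\kappa_a=\kappa_u=0$, using the explicit formulas for $v_a$ in each $\mathcal A^2$-class.
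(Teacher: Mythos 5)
Your strategy---identifying the Hessian of $h_{v_a}\circ f$ at $q$ with the matrix of $II_{v_a}$ in the basis $\{\partial_u,\partial_v\}$ and reading off both claims from its rank---is genuinely different from the paper's proof, which simply combines Propositions \ref{a1} and \ref{binormal} with Theorem 2.15 of \cite{MartinsBallesteros}. For the first claim your argument is complete and uniform over all types of $\Delta_p$: $\det\mathcal H=0$ iff $II_{v_a}$ has a kernel vector iff $v_a$ is a binormal direction, with no case distinction and no reliance on the external reference.

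The corank-$2$ analysis, however, fails in case (d). For $\Delta_p$ a point off the origin the paper takes $v_a$ orthogonal to $\eta(y)/|\eta(y)|$ (explicitly $v_a=(-b_{20},a_{20})/\sqrt{a_{20}^2+b_{20}^2}$), so $\inner{f_{uu}}{v_a}=\inner{\eta}{v_a}=0$ automatically; since $f_{uv}^\perp=f_{vv}^\perp=0$ as well, $\mathcal H$ vanishes for \emph{every} point-type singularity, not only when the point is the origin. Hence your claim that corank $2$ in case (d) forces $\kappa_u(p)=0$ is not what your own computation yields. Carried out honestly, your case analysis gives ``corank $2$ iff $\Delta_p$ is a point'', and this genuinely conflicts with the statement being proved: your cases (a)--(b) already show corank $2$ is impossible whenever $\Delta_p$ is a half-line, yet the cuspidal edge $f(u,v)=(u,v^2/2,v^3)$ has $\Delta_p$ a half-line with $\kappa_a=\kappa_u=0$ while $h_{v_a}\circ f=v^2/2$ has only a corank-$1$ singularity. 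The root of the mismatch is visible in the paper's own proof: Theorem 2.15 of \cite{MartinsBallesteros} concerns the \emph{infinite} binormal direction $v_\infty$, which for a half-line or a line is orthogonal to $v_a=\eta(y_\infty)$, whereas Proposition \ref{binormal} only identifies $\kappa_a=0$ with $v_a$ being \emph{some} binormal direction; the two conditions are not interchangeable. So rather than adjusting case (d) to reproduce the stated equivalence, you should report what the Hessian actually gives in each of the four classes and flag the discrepancy with the statement.
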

\begin{proof}
The first assertion follows directly from Propositions \ref{a1} and \ref{binormal} and is also found in Theorem 2.15 in \cite{MartinsBallesteros}. On the other hand, their result states that the singularity of the height function in a direction $v$ is of corank 2 if and only if $\Delta_p$ is degenerate and $\kappa_u(p)=0$ and $v$ is an infinite binormal direction. This, together with Proposition \ref{binormal} give that the singularity of $h_{v_a}$ is of corank 2 if and only if $\Delta_p$ is degenerate and $\kappa_a(p)=\kappa_u(p)=0$.
\end{proof}

Given a surface $M\subset\mathbb{R}^{3}$ with corank $1$ singularity at $p\in M$, the point $p$ is called elliptic, hyperbolic, parabolic or inflection according to whether there are 0, 2, 1 or infinite asymptotic directions at that point (see \cite{benedinioset}). Equivalently in \cite{OsetSinhaTari}, the point is elliptic, hyperbolic or parabolic according to whether the $GL(2,{\mathbb R})\times GL(2,{\mathbb R})$-orbit of the pair $(j^{2}f_{2}(u,v),j^{2}f_{3}(u,v))$ is of elliptic, hyperbolic or parabolic type. These two definitions coincide. Differently from the regular case, for a singular point, being elliptic or hyperbolic does not ensure the existence of an osculating plane such that the surface is locally on one side of the plane. This is distinguished by the sign of $\kappa_a$. In fact, the sign of $\kappa_a$ does not always imply whether the point is elliptic or hyperbolic, however it does imply the ``ellipticity" or ``hyperbolicity" in the ``regular sense", that is, whether the surface is only on one side of the osculating plane or on both. If $\Delta_p$ is a non-degenerate parabola (i.e. $p$ is a cross-cap ) and $\kappa_a>0$ then $p$ is a hyperbolic point (since there are two asymptotic directions), but if $\kappa_a<0$, then it can be hyperbolic, elliptic or parabolic. If $\kappa_a=0$, the point can be hyperbolic or parabolic.

\begin{ex}
In \cite{FukuiHasegawa} and \cite{West} it is proven that with changes of coordinates in the source and isometries in the target a cross-cap can be parametrised by $f(u,v)=(u,c_{20}u^2+c_{11}uv+c_{02}v^2+O(3)(u,v),uv+O(3)(v)),$ with $c_{02}\neq0$. The cross-cap is called hyperbolic, elliptic or parabolic if $c_{20}$ is negative, positive or zero (\cite{West}). A cross-cap is hyperbolic, elliptic or parabolic if and only if the singular point is elliptic, hyperbolic or parabolic in the above sense (\cite{BallesterosTari},\cite{OsetSinhaTari}). Consider the case $f(u,v)=(u,u^2-3uv+v^2,uv)$ which is an elliptic cross-cap (hyperbolic point) and has two asymptotic directions. Here $\kappa_a=-\frac{5}{2}$ is negative and so, by Corollary \ref{side} the surface is on both sides of $v_a^{\perp}$. On the other hand, consider $f(u,v)=(u,-u^2+v^2,uv)$, which is a hyperbolic cross-cap (elliptic point) and has no asymptotic directions. Here $\kappa_a=-2$ is also negative and so the surface is also on both sides of $v_a^{\perp}$. See Figure \ref{crosscaps}.
\begin{figure}
\begin{center}
\includegraphics[width=0.7\linewidth]{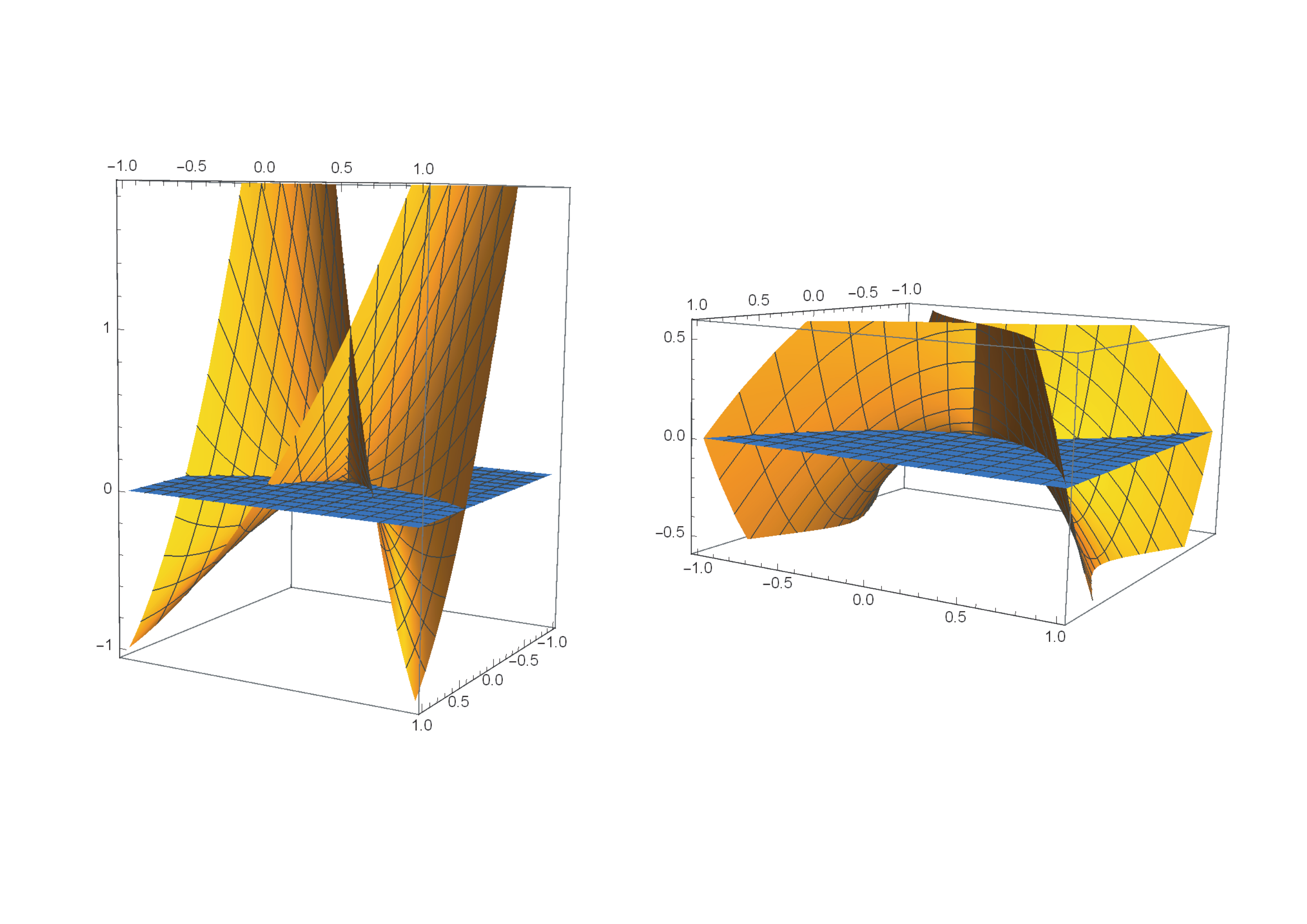}
\caption{Both the elliptic cross-cap (left) and the hyperbolic cross-cap (right) lie on both sides of $v_a^{\perp}$.}\label{crosscaps}
\end{center}
\end{figure}
\end{ex}

In order to distinguish when a cross-cap with negative axial curvature is an elliptic, hyperbolic or parabolic cross-cap we have the following criteria.

\begin{prop}\label{crosscaptype}
Let $f$ be such that $\Delta_p$ is a non-degenerate parabola and suppose $\kappa_a<0$. Then $p$ is an elliptic cross-cap (resp. hyperbolic cross-cap) if and only if the intersection of $M$ with the plane $v_a^{\perp}$ is two tangent quadratic curves which lie in the same half-plane (resp. in different half-planes). Moreover, $p$ is a parabolic cross-cap if and only if one of the curves is a straight line.
\end{prop}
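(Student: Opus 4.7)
The plan is to work with the cross-cap normal form recalled in the preceding example: after a source diffeomorphism and an ambient isometry, $f(u,v)=(u,\,c_{20}u^2+c_{11}uv+c_{02}v^2+O(3)(u,v),\,uv+O(3)(v))$ with $c_{02}\neq 0$, and (possibly after the isometry $y\mapsto -y$) we may take $c_{02}>0$. In this form the elliptic / hyperbolic / parabolic trichotomy is by $\operatorname{sgn}(c_{20})$. A direct application of Proposition \ref{formula} with $a_{20}=2c_{20}$, $a_{11}=c_{11}$, $a_{02}=2c_{02}$, $b_{20}=0$, $b_{11}=1$, $b_{02}=0$ identifies the axial vector as $v_a=(0,1,0)$, so that $v_a^{\perp}$ is the coordinate $(x,z)$-plane and $\kappa_a(p)=(4c_{20}c_{02}-c_{11}^{\,2})/(2c_{02})$; the hypothesis $\kappa_a(p)<0$ is therefore equivalent to the quadratic form $c_{20}u^2+c_{11}uv+c_{02}v^2$ being indefinite.

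Since $v_a^{\perp}=\{y=0\}$ we have $M\cap v_a^{\perp}=f(Z)$, where $Z\subset(\R^2,0)$ is the zero set of the second coordinate of $f$. Its $2$-jet is the indefinite nondegenerate quadratic above, i.e.\ it has an $A_1^{-}$-singularity at the origin (consistently with Proposition \ref{a1}); by the Morse lemma (or Weierstrass preparation combined with the nonvanishing of the discriminant) $Z$ is the union of two smooth curves $\gamma_i(u)=(u,\phi_i(u))$ with $\phi_i(0)=0$ and $\phi_i'(0)=r_i$, where $r_1,r_2$ are the two distinct real roots of $c_{02}r^2+c_{11}r+c_{20}=0$; in particular $r_1r_2=c_{20}/c_{02}$.

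Next, substituting $v=\phi_i(u)=r_iu+O(u^2)$ into the third coordinate $uv+O(3)(v)$ yields
\[
f(\gamma_i(u))=(u,\,0,\,r_iu^2+O(u^3)).
\]
Both image curves therefore lie in $v_a^{\perp}$, pass through $p$, and share the $x$-axis ($=T_pM\cap v_a^{\perp}$) as a common tangent line at $p$, so they are mutually tangent at $p$. When $r_i\neq 0$ the curve has a nondegenerate osculating parabola $z=r_ix^2$ and lies locally on the half-plane $\{\operatorname{sgn}(z)=\operatorname{sgn}(r_i)\}$ of $v_a^{\perp}$; this is what the statement calls a \emph{quadratic curve}. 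When $r_i=0$ the quadratic coefficient of $z$ vanishes, the curve agrees with its tangent line to second order, and this is the meaning of \emph{straight line} in the statement.

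The three cases then follow immediately from the sign of $r_1r_2=c_{20}/c_{02}$ (with $c_{02}>0$): elliptic ($c_{20}>0$) $\Leftrightarrow$ $r_1,r_2$ have the same nonzero sign $\Leftrightarrow$ both quadratic curves lie in the same half-plane; hyperbolic ($c_{20}<0$) $\Leftrightarrow$ $r_1,r_2$ have opposite nonzero signs $\Leftrightarrow$ the two quadratic curves lie in opposite half-planes; parabolic ($c_{20}=0$) $\Leftrightarrow$ some $r_i=0$ $\Leftrightarrow$ exactly one of the two curves is a straight line. The only technical subtlety is the smooth $A_1^{-}$-splitting of $Z$ in the source; once that is granted, everything else is a leading-order computation in $u$.
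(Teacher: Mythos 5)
Your proof is correct and essentially the paper's own argument: both reduce to West's normal form, use $\kappa_a<0$ to split the zero set of the second coordinate (the height function $h_{v_a}$) into two smooth branches of an $A_1^-$ level set, read off the quadratic coefficient of each image curve in $v_a^{\perp}$ as the branch slope, and decide the half-plane configuration from the product of the slopes, i.e.\ from $\operatorname{sgn}(c_{20})$. The only differences are cosmetic --- you parametrise the branches as graphs over $u$ with slopes $r_i$ satisfying $c_{02}r^2+c_{11}r+c_{20}=0$ (which handles the parabolic case $r_i=0$ more uniformly than the paper's reciprocal slopes $a_i$, whose formula degenerates when $h_{uu}=0$) --- though note that your parenthetical claim that the isometry $y\mapsto -y$ arranges $c_{02}>0$ is slightly off, since that isometry also flips the sign of $c_{20}$; the normalisation $c_{02}>0$ is really part of the normal-form convention under which the paper states West's trichotomy.
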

\begin{proof}
Consider the parametrisation of type $f(u,v)=(u,c_{20}u^2+c_{11}uv+c_{02}v^2+O(3)(u,v),uv+O(3)(v)),$ then the coordinate system of Lemma \ref{lem:special} is satisfied. The intersection of $v_a^{\perp}$ with $M$ is given by $\inner{f_{vv}}{f}=0$. This is equal to the height function in the direction $v_a$, $h_{v_a}$, which we denote by $h$. The Hessian of the height function is equal to $\kappa_a$ and there is an $A_1^-$ singularity when it is negative. The two solutions for the intersection of $M$ with the osculating plane are given by $$\frac{-h_{uv}\pm\sqrt{-\kappa_a}}{h_{uu}}=\frac{-h_{uv}\pm\sqrt{h_{uv}^2-h_{uu}h_{vv}}}{h_{uu}}.$$ We denote these solutions by $a_1$ and $a_2$. So the zero level curves of the height function in the source ($h^{-1}(0)$) are parameterised by $(a_it+\ldots,t)$ for $i=1,2$, where $\ldots$ represents higher order terms. The image of these two curves is $f(a_it+\ldots,t)=(a_it+\ldots,\ldots,a_it^2+\ldots)$, so we get two tangent quadratic curves in the osculating plane if $a_i\neq 0$ for $i=1,2$. If one of the solutions is zero (i.e. $h_{uu}h_{vv}=0$), then one of the curves is parameterised by $(t,0)$ and the image is a straight line $(t,0,0)$.

On the other hand the solutions $a_1$ and $a_2$ have the same sign (resp. opposite) if $h_{uu}h_{vv}>0$ (resp. $h_{uu}h_{vv}<0$). Using $\inner{f_{vv}}{f_{vv}}=1$, we get $h_{uu}h_{vv}=\inner{f_{uu}}{f_{vv}}$ and $\inner{f_{uu}}{f_{vv}}>0$ (resp. $\inner{f_{uu}}{f_{vv}}<0$) if and only if $c_{20}>0$ (resp. $c_{20}<0$). This means that the curves lie in the same half-plane if and only if the cross-cap is elliptic and in different half-planes if and only if the cross-cap is hyperbolic. Moreover, one of the solution is 0 if and only if $h_{uu}h_{vv}=\inner{f_{uu}}{f_{vv}}=c_{20}=0$ (which means that the cross-cap is parabolic).
\end{proof}

\begin{ex}
\begin{itemize}
\item[i)] Considering the cross-caps of the above example, the intersection of $M$ with $v_a^{\perp}$ for the elliptic cross-cap $f(u,v)=(u,u^2-3uv+v^2,uv)$ is given by the curves $(\frac{3\pm\sqrt{5}}{2}t,0,\frac{3\pm\sqrt{5}}{2}t^2)$, both of which are in the same half-plane. On the other hand, for the hyperbolic cross-cap $f(u,v)=(u,-u^2+v^2,uv)$ the curves are given by $(t,0,\pm t^2)$, which are in different half-planes. See Figure \ref{osculating}.
\begin{figure}
\begin{center}
\includegraphics[width=0.7\linewidth]{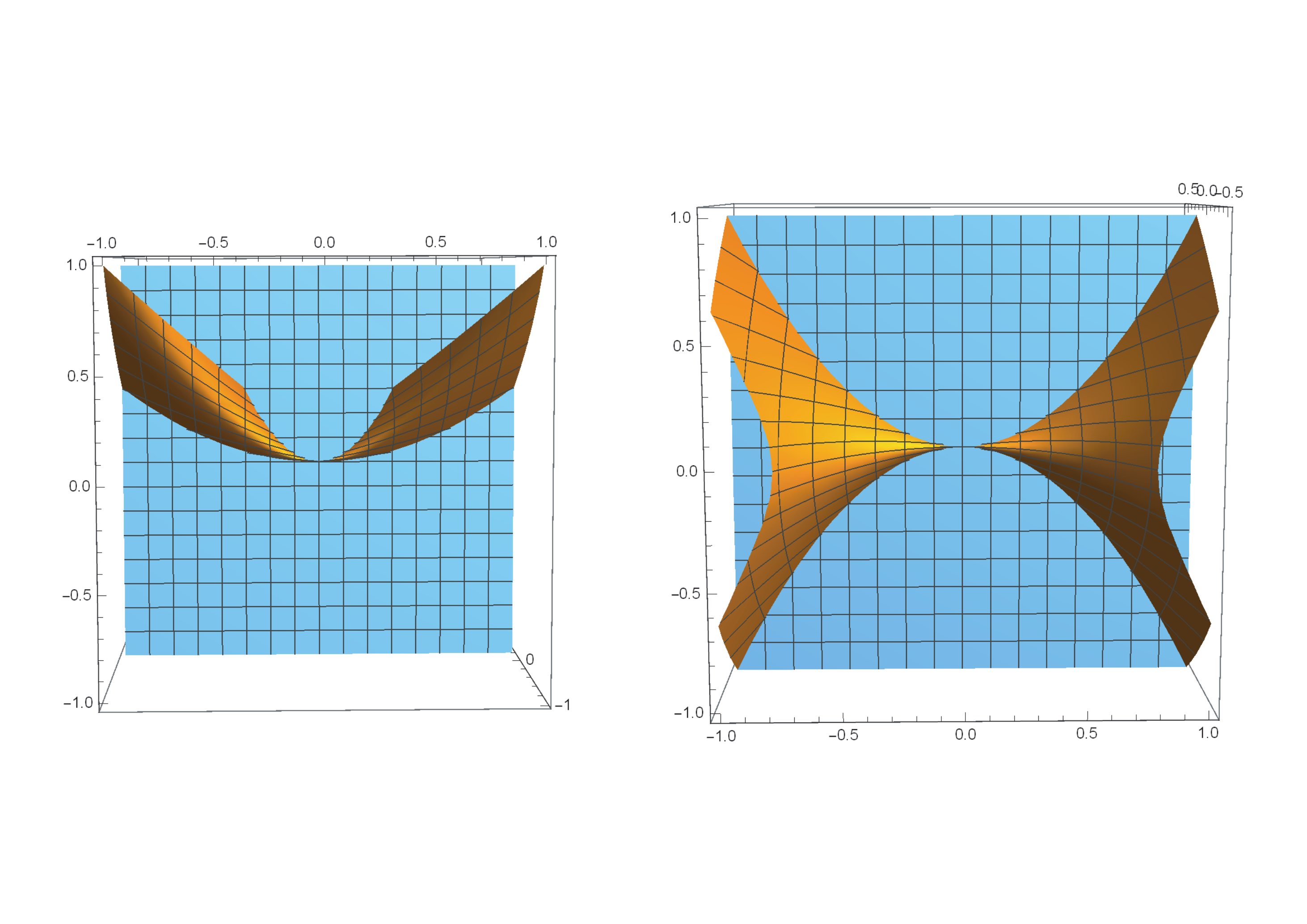}
\caption{The left one is the elliptic cross-cap and the right one the hyperbolic one.}\label{osculating}
\end{center}
\end{figure}
\item[ii)] Consider the parabolic cross-cap given by $f(u,v)=(u,-3uv+v^2,uv)$. Here $\kappa_a=\frac{-9}{2}<0$. The intersection of $M$ with $v_a^{\perp}$ is given by the curves $(t,0,3t^2)$ and $(t,0,0)$.
\end{itemize}
\end{ex}

Similarly to above, when $\Delta_p$ is a half-line and $\kappa_a<0$, then the singular point $p$ can be hyperbolic (if the line that contains $\Delta_p$ does not pass through $p$, i.e. it has two asymptotic directions) or an inflection point (if the line that contains $\Delta_p$ passes through $p$, i.e. it has infinite asymptotic directions). There are criteria to distinguish these two situations, however these criteria depend on the type of singularity. For example, for cuspidal edges we have the following

\begin{prop}
Let $f$ be cuspidal edge and suppose $\kappa_a<0$. Then $p$ is a hyperbolic point if and only if the intersection of $M$ with $v_a^{\perp}$ is two tangent cubic curves which meet at two local maxima or two local minima (i.e. they lie in the same half-plane). Moreover, $p$ is an inflection point if and only if at least one of the curves has an inflection point at the origin of $v_a^{\perp}$.
\end{prop}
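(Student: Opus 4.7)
The plan is to mirror the strategy of Proposition \ref{crosscaptype}. First I would choose an adapted coordinate system $(u,v)$ centred at $q$ (so $S(f)=\{v=0\}$, $\partial_v$ is the null vector field, and therefore $f_v(q)=0$ and $f_{uv}(q)=0$) that additionally satisfies the hypotheses of Lemma \ref{lem:special}, namely $|f_u(q)|=|f_{vv}(q)|=1$ and $\inner{f_u(q)}{f_{vv}(q)}=0$. In such coordinates $v_a=f_{vv}(q)$, the unit normal is $\nu=f_u(q)\times f_{vv}(q)$, and $v_a^\perp$ is spanned by $\{f_u(q),\nu\}$. Since $f_{uv}(q)=0$, the Hessian of $h=\inner{f}{v_a}$ at $q$ reduces to $\operatorname{diag}(\kappa_a,1)$ (as in the proof of Proposition \ref{a1}), so $\kappa_a<0$ gives an $A_1^-$-singularity and the implicit function theorem produces two smooth branches $v=\phi_\pm(u)$ with $\phi_\pm'(0)=\pm\sqrt{-\kappa_a}$. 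The intersection $M\cap v_a^\perp$ is then parametrised near $p$ by $g_\pm(u)=f(u,\phi_\pm(u))$.

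First, $g_\pm'(0)=f_u(q)$ (since $f_v(q)=0$), so both branches are tangent at $p$ with common tangent $f_u(q)$. Next, a Taylor expansion of $\inner{g_\pm(u)}{\nu}$, combined with $f_{uv}(q)=0$ and $\inner{f_{vv}(q)}{\nu}=0$, shows that the quadratic coefficient equals $\tfrac12\inner{f_{uu}(q)}{\nu}$, independent of the sign and equal (up to a common sign) to $\tfrac12\kappa_u$ by Theorem \ref{kukn}(ii). This proves the hyperbolic statement directly: if $\kappa_u\neq 0$ (equivalently the line containing $\Delta_p$ misses $p$, so $p$ is hyperbolic), both branches have the same nonzero quadratic leading term in the $\nu$-direction; they therefore lie in the same half-plane of $v_a^\perp$ and both attain a local minimum (or both a local maximum) at $p$, while the cubic and higher-order remainders of $\phi_\pm$ differ for the two signs, so the two graphs are genuinely distinct cubic perturbations of a common parabola. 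Conversely, the same-half-plane condition forces the common quadratic coefficient to be nonzero and hence $\kappa_u\neq 0$.

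In the inflection case ($\kappa_u=0$, i.e.\ the line containing $\Delta_p$ passes through $p$), the quadratic coefficient vanishes, and the first nontrivial term of $\inner{g_\pm(u)}{\nu}$ is cubic. Computing $g_\pm'''(0)$ using the identities above yields a cubic coefficient of the form $A\pm B$, where $A$ collects the $\pm$-independent contributions from $\inner{f_{uuu}(q)}{\nu}$ and $\inner{f_{uvv}(q)}{\nu}$, and $B$ collects the $\pm$-dependent contributions from $\inner{f_{uuv}(q)}{\nu}$ and $\inner{f_{vvv}(q)}{\nu}$. Provided $(A,B)\neq(0,0)$, at least one of $A+B$, $A-B$ is nonzero, so at least one branch has an odd leading order in the $\nu$-direction and therefore an inflection at the origin of $v_a^\perp$. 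Conversely, an inflection in either branch forces the shared quadratic coefficient to vanish, giving $\kappa_u=0$.

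The main obstacle is the nondegeneracy $(A,B)\neq(0,0)$ needed in the forward direction of the inflection case. This should follow from the cuspidal-edge condition, which forces $\{f_u(q),f_{vv}(q),f_{vvv}(q)\}$ to be linearly independent and hence $\inner{f_{vvv}(q)}{\nu}\neq 0$, combined with $\kappa_a\neq 0$: the coefficient $B=\tfrac{s}{2}\inner{f_{uuv}(q)}{\nu}+\tfrac{s^3}{6}\inner{f_{vvv}(q)}{\nu}$ with $s=\sqrt{-\kappa_a}$ is nonzero for generic cuspidal edges, and the asymmetry of the cubic coefficient under $\pm$ is the precise reason the proposition is phrased ``at least one of the curves'' rather than ``both''.
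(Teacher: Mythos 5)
Your strategy is essentially the paper's: normalize coordinates as in Lemma \ref{lem:special}, write $h_{v_a}^{-1}(0)$ as two graphs $v=\phi_\pm(u)$ with $\phi_\pm'(0)=\pm\sqrt{-\kappa_a}$, and read off the position of the image curves in $v_a^\perp$ from the Taylor coefficients of their $\nu$-component. Your second-order coefficient $\tfrac12\inner{f_{uu}(q)}{\nu}$ is (up to sign) the paper's $(a^\pm)''(0)=\det(f_u,f_{uu},f_{vv})(q)$, and your cubic coefficients $A\pm B$ match the paper's $(a^\pm)'''(0)$; the equivalence ``$\inner{f_{uu}(q)}{\nu}\ne0$ iff $p$ hyperbolic'' is correctly identified with the line containing $\Delta_p$ missing $p$. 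Up to that point the argument is sound.

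The genuine gap is the forward direction of the inflection statement: you need $(A+B,A-B)\neq(0,0)$, i.e.\ $B\neq0$, for \emph{every} cuspidal edge with $\kappa_a<0$, and your last paragraph only asserts this ``for generic cuspidal edges.'' Indeed, $\inner{f_{vvv}(q)}{\nu}\neq0$ and $s\neq0$ alone do not give $B\neq0$, since the $\inner{f_{uuv}(q)}{\nu}$ term could cancel the $\inner{f_{vvv}(q)}{\nu}$ term, and if both cubic coefficients vanished neither branch would have an inflection at the origin, breaking the ``if and only if.'' The paper closes this by passing to the Martins--Saji normal form, where the third component has no $u^2v$ term and $(a^\pm)'''(0)=b_{30}-3b_{12}a_{20}\pm b_{03}(-a_{20})^{3/2}$ with $b_{03}\ne0\ne a_{20}$, so the two values cannot vanish simultaneously. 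You can close it inside your own setup: in an adapted coordinate system $f_{uv}$ vanishes identically along $S(f)=\{v=0\}$, hence $f_{uuv}(q)=\partial_u\big(f_{uv}|_{S(f)}\big)(q)=0$ and $B=\tfrac{s^3}{6}\inner{f_{vvv}(q)}{\nu}$, which is nonzero precisely by the cuspidal-edge criterion $\det(f_u,f_{vv},f_{vvv})(q)\neq0$. (One should also check that the normalizations of Lemma \ref{lem:special} can be imposed by coordinate changes preserving adaptedness, e.g.\ rescalings and $u\mapsto u+\alpha v^2$, which is the case.) With that observation your proof is complete and coincides with the paper's.
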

\begin{proof}
The proof is similar to that of Proposition \ref{crosscaptype} but in some ways gives more information.

If $f$ is a cuspidal edge then $\Delta_p$ is a half-line. Then there exists a coordinate system such that
$|f_u(q)|=|f_{vv}(q)|=1$, $f_v(q)=0$, $\inner{f_u}{f_{vv}}(q)=0$.
Notice that $v_a=f_{vv}(q)$.

Then on this coordinate system,
$\det(f_u,f_{uv},f_{vv})(q)=0$. This condition together with $|f_{vv}(q)|=1$ is equivalent to item b) in Theorem \ref{conditionsparabola}.

We define $h_{v_a}=h=\inner{f}{v_a}$.
Then $h_u(p)=h_v(p)=0$, and
by the assumption $\kappa_a<0$, it holds that
$\det {\rm Hess} h(p)<0$.
Thus $h^{-1}(0)$ is two transversal curves.
Moreover, if $\inner{f_{vv}}{f_{vv}}(q)\ne0$,
these curves are not tangent to the $v$-axis.
Thus these curves can be parametrized as
$$
(u,c(u)).
$$
This $c$ satisfies
$h_u+h_vc'=0$, and
$h_{uu}+2h_{uv}c'+h_{vv}(c')^2=0$.
Since $h_{vv}(p)\ne0$,
\begin{equation}\label{eq:cprime}
c'(0)
=
\dfrac{-h_{uv}\pm\sqrt{h_{uv}^2-h_{uu}h_{vv}}}{h_{vv}}.
\end{equation}
We denote by
$c^+(u)$ the case of $c(u)$
which satisfies $(c^+)'(0)$ is equal to \eqref{eq:cprime} with the ``$+$'' sign, and
by $c^-(u)$ the case when $(c^-)'(0)$ is equal to \eqref{eq:cprime} with the ``$-$'' sign.
The intersection curves of $v_a^\perp$ and $f$
are
$\widetilde c^\pm(u)=f(u,c^\pm(u))$.
We have $(\widetilde c^\pm)'(0)=f_u(0,0)(\ne0)$, therefore these curves lie on one half plane
if and only if
the signs of $a^+(u)$ and $a^-(u)$ are the same for $u$ small, where
$$
a^\pm(u)=
\det(f_u(q), (\widetilde c^\pm)(u),f_{vv}(q)).
$$

We see that $(a^\pm)'(0)=0$.
On the other hand, since
$$
(\widetilde c^\pm)''(u)
=
f_{uu}(q)+2f_{uv}(q)c'(0)+f_{vv}(q)(c'(0))^2,
$$
and
$\det(f_u,f_{uv},f_{vv})(q)=0$,
$$
(a^\pm)''(0)=
\det(f_u(q),f_{uu}(q),f_{vv}(q)).
$$
If $\det(f_u(q),f_{uu}(q),f_{vv}(q))\ne0$,
then the sign of $(a^\pm)''(0)$ is equal to the sign of
$\det(f_u(q),f_{uu}(q),f_{vv}(q))$,
namely, $(a^\pm)''(0)$
does not depend on the sign $\pm$.
Thus both $\widetilde c^\pm$ lie on one half-plane. In fact, they meet tangentially at local minima or maxima.

The condition $\det(f_u(q),f_{uu}(q),f_{vv}(q))=0$ means that $f_{uu}$ and $f_{vv}$ are parallel. On the other hand, $\det(f_u(q),f_{uv}(q),f_{vv}(q))=0$ means that $f_{uv}$ is parallel to $f_{vv}$ too. So $II(\partial u+y\partial v,\partial u+y\partial v)=f_{uu}^{\perp}(q)+2yf_{uv}^{\perp}(q)+y^2f_{vv}^{\perp}(q)=\phi(y)f_{vv}(q)$, which means that the line containing $\Delta_p$ passes through $p$. This means that $p$ is an inflection point. Similarly $\det(f_u(q),f_{uu}(q),f_{vv}(q))\ne0$ means that $p$ is a hyperbolic point.

If $\det(f_u(q),f_{uu}(q),f_{vv}(q))=0$,
we have to look at $(a^\pm)'''(0)$.
\begin{align}
(a^\pm)'''(0)=&
\det\Big(f_u(q),
f_{uuu}(q)+3f_{uuv}(q)c'(0)\nonumber\\
&\hspace{20mm}+3f_{uvv}(q)(c'(0))^2+f_{vvv}(q)(c'(0))^3,f_{vv}(q)\Big).
\label{eq:apmppp}
\end{align}
For the case of cuspidal edges we take the normal form given in \cite{martinssaji1}, which is invariant under changes of coordinates in the source and isometries in the target $$(u,\frac{a_{20}}{2}u^2+\frac{a_{30}}{6}u^3+\frac{1}{2}v^2+o(4)(u),\frac{b_{20}}{2}u^2+\frac{b_{30}}{6}u^3+\frac{b_{12}}{2}uv^2+\frac{b_{03}}{6}v^3+o(4)(u,v)).$$ Here $\kappa_a=a_{20}$ and $c'(0)=\pm\sqrt{-a_{20}}$, therefore $$(a^\pm)'''(0)=b_{30}-3b_{12}a_{20}\pm b_{03}(-a_{20})^{3/2}.$$ Since $b_{03}\neq 0\neq a_{20}$, $(a^+)'''(0)$ and $(a^-)'''(0)$ cannot be zero at the same time. Suppose it is $(a^+)'''(0)$ which is different from zero, this means that the function $a^+$ has an inflection point at 0 and changes sign when when $u$ goes from negative to positive. This implies that $\widetilde c^+$ also changes sign when $u$ goes from negative to positive. In fact, $\det(f_u(q),f_{uu}(q),f_{vv}(q))=0$ implies $b_{20}=0$, and so $\widetilde c^+(u)=f(u,c^+(u))$ has an inflection point at the origin.
\end{proof}

\begin{rem}
Most of the proof above is valid for any singularity such that $\Delta_p$ is a half-line. This includes all the fold singularities in Mond's list or most non-degenerate frontal singularities. However, the value of
$(a^\pm)'''(0)$ in \eqref{eq:apmppp}
may vary from one singularity to another. The criterion for hyperbolic points is always the same, however, for inflection points it may vary as the examples below suggest. At inflection points, the curves of intersection of $M$ with the osculating plane have contact order higher than two with one of the axis of coordinates of the plane, however the type of contact depends on $(a^\pm)'''(0)$ and the following derivatives, so a general statement would be too vague.
\end{rem}

\begin{ex}
\begin{itemize}
\item[i)] Consider the cuspidal edge given by $f(u,v)=(u,-u^2+v^2,u^2+v^3)$, the curvature parabola is parameterised by $(-2+2y^2,2)$ and $\kappa_a=-2<0$, so $p$ is a hyperbolic point. The intersection of $M$ with the osculating plane is given by the curves $(t,0,t^2+t^3)$ and $(-t,0,t^2+t^3)$, which meet tangentially at two local minima.
\item[ii)] Consider the cuspidal edges given by $f_1(u,v)=(u,-u^2+v^2,v^3)$ and $f_2(u,v)=(u,-u^2+v^2,u^3+uv^2+v^3)$. In both cases the curvature parabola is parameterised by $(-2+2y^2,0)$ and both have negative axial curvature, therefore the point $p$ is an inflection point. The intersection curves for the first case are given by $(t,0,t^3)$ and $(-t,0,t^3)$ which meet tangentially at inflection points of the curves. Notice that for $t$ small these curves lie on opposite half-planes. The intersection curves for the second case are given by $(t,0,3t^3)$ and $(-t,0,-t^3)$. These two curves also meet tangentially at inflection points but, differently from the first case, both curves always lie in the same half-plane for $t$ small.
\item[iii)] Consider the cuspidal edge given by $h(u,v)=(u,-u^2+u^3+v^2,u^3+v^3)$. Here $(a^-)'''(0)=0$. The intersection curves are given by $(t,0,t^3\pm (t^2-t^3)^{\frac{3}{2}})$, one of which has an inflection point at the origin.
\item[iv)] Consider the cuspidal cross-caps given by $g_1(u,v)=(u,-u^2+v^2,uv^3)$ and $g_2(u,v)=(u,-u^2+v^2,u^2+uv^3)$. The first case is an inflection point and the intersection curves of $M$ with the osculating plane are given by $(t,0,t^4)$ and $(-t,0,-t^4)$, which meet tangentially at a local minimum and a local maximum. The curves lie in different half-planes. The second case is a hyperbolic point and the intersection curves are given by $(t,0,t^2+t^4)$ and $(-t,0,t^2-t^4)$. These two curves meet at local minima and both lie in the same half-plane.
\end{itemize}
\end{ex}



\section{Relation of the Gaussian curvature with the axial curvature for certain fold singularities}

In order to consider the Gaussian curvature a unit normal vector field is needed. This is natural for frontal type singularities, but for other types of singularities
we need to use certain blow ups as in \cite{FukuiHasegawa,FukuiHasegawa2}. For the cross-cap singularity ($j^2f(0)\sim_{\mathcal{A}^{2}}(u,v^2,uv)$), Koenderink and Gauss-Bonnet type formulas have been obtained already (see \cite{FHS} and \cite{HHNSUY}). When $j^2f(0)\sim_{\mathcal{A}^{2}}(u,uv,0)$ the axial curvature is not bounded and when $j^2f(0)\sim_{\mathcal{A}^{2}}(u,0,0)$ the axial curvature is 0, so we consider only the case
$j^2 f(0)\sim_{\mathcal{A}^{2}}(u,v^{2},0)$. These singularities are called fold singularities by Mond in \cite{mond} and include the $S_k$, $B_k$ and $C_k$ singularities in his list, amongst others.

Let us assume
$j^2 f(0)\sim_{\mathcal{A}^{2}}(u,v^{2},0)$.
Then by a coordinate change on the source space and
by an action of $O(3)$ in the target space,
$f$ can be written in the following form.
For any $k\geq 1$,
\begin{align}
f(u,v)=&
\Bigg(
u, \dfrac{u^2}{2} a_0(u) + \dfrac{u^k v}{2} a_1(u) + \dfrac{v^2}{2} a_2(u, v),
\nonumber \\
\label{eq:formf}
&\hspace{2mm}
\dfrac{u^2}{2} b_0(u) + \dfrac{u^2 v}{2} b_1(u)
+ \dfrac{uv^2}{2} b_3(u) + \dfrac{v^3}{6} b_4(u, v)\Bigg),\quad(a_2(0,0)=1)
\end{align}
for some functions $a_0,a_1,a_2,b_0,b_1,b_3,b_4$.
See \cite{West,FukuiHasegawa,FukuiHasegawa2}.
We assume that $b_1(0)\ne0$, which includes $S_1$ or $B_k$ singularities in Mond's list, for example.

Let us set
$\Pi:\R\times S^1\to\R^2$ by
$$\Pi(r,\theta)=(r\cos\theta,r^2\cos\theta \sin\theta/2).$$
Then
$$
\Pi^*(f_{u}\times f_{v})
=
\dfrac{r^2}{2}\cos\theta\Big(
O(r^3),-b_1(0)\cos\theta+O(r^3),a_2(0,0) \sin\theta+O(r^3)\Big).
$$
Thus if we set
$$
\tilde \nu(r,\theta)=
\dfrac{\Pi^*(f_{u}\times f_{v})}
{r^2\cos\theta}
$$
and
$$
\nu(r,\theta)=\tilde\nu/\sqrt{\tilde\nu\cdot\tilde\nu},
$$
then the unit normal of $f$ is well-defined on the set
$(\R\times S^1;(r,\theta))$.

\begin{rem}
The assumption $b_1(0)\neq 0$ can be weakened by considering
$b_1(0)=\cdots =(b_1)^{(k)}(0)=0, (b_1)^{(k+1)}(0)\ne0$ instead.
Then the blow up should be changed to
$u=r\cos\theta,v=r^{k+1}\cos^k\theta \sin\theta/(k+1)!$ (\cite{FukuiHasegawa2}).
\end{rem}

We set
$E(r,\theta)=\Pi^*(f_{u}\cdot f_{u})$,
$F(r,\theta)=\Pi^*(f_{u}\cdot f_{v})$,
$G(r,\theta)=\Pi^*(f_{v}\cdot f_{v})$,
$L(r,\theta)=\Pi^*(f_{uu})\cdot \nu$,
$M(r,\theta)=\Pi^*(f_{uv})\cdot \nu$,
$N(r,\theta)=\Pi^*(f_{vv})\cdot \nu$,
and
$K=(LN-M^2)/(EG-F^2)$.
On this coordinate system, the Gaussian curvature $K$ can be computed as
$$
K=
\dfrac{
a_2(0,0) b_1(0)
\Big(a_0(0) b_1(0) \cos\theta-a_2(0,0) b_0(0) \sin\theta\Big)+O(r)
}
{r^4 \cos\theta\Big(b_1(0)^2 \cos^2\theta+a_2(0,0)^2 \sin^2\theta\Big)^2
/4+O(r^5)}.
$$
Since $a_2(0,0)=1$,
we can observe that the boundedness of the Gaussian curvature
is firstly controlled by the term
$$
b_1(0)\Big(a_0(0) b_1(0) \cos\theta-b_0(0) \sin\theta\Big).
$$
We set $\tilde K=b_1(0)\Big(a_0(0) b_1(0) \cos\theta
-b_0(0) \sin\theta\Big)$.

Since
the axial curvature $\kappa_a$ is $a_0(0)$, and
the umbilic curvature $\kappa_u$ is $b_0(0)$,
we have

\begin{prop}
Suppose that $j^2 f(0)\sim_{\mathcal{A}^{2}}(u,v^{2},0)$ and that $b_1(0)\neq 0$, then the boundedness of the Gaussian curvature depends on the term $$\tilde K=b_1(0)\Big(\kappa_a b_1(0) \cos\theta
-\kappa_u \sin\theta\Big).$$
\end{prop}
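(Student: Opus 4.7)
The plan is to observe that the proposition is essentially a dictionary translation: the expression $\tilde K = b_1(0)(a_0(0) b_1(0) \cos\theta - b_0(0) \sin\theta)$ derived immediately above already has the desired shape, once we identify $a_0(0)$ and $b_0(0)$ with the invariants $\kappa_a(p)$ and $\kappa_u(p)$ respectively. So the proof reduces to verifying the two identifications $\kappa_a(p) = a_0(0)$ and $\kappa_u(p) = b_0(0)$ for $f$ written in the normal form \eqref{eq:formf}.

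First I would extract the 2-jet of $f$ from \eqref{eq:formf}. The cross-term $\tfrac{u^k v}{2} a_1(u)$ is of order $\ge 3$ in the effective setting of the blow-up $\Pi$ (i.e.\ for $k\ge 2$), and every term in the third coordinate other than $\tfrac{b_0(0)}{2} u^2$ is also of order $\ge 3$. Hence
$$j^2 f(0) = \left(u,\ \frac{a_0(0)}{2} u^2 + \frac{1}{2} v^2,\ \frac{b_0(0)}{2} u^2\right),$$
giving Monge coefficients (in the notation of Theorem \ref{conditionsparabola}) $a_{20} = a_0(0)$, $a_{02} = a_2(0,0) = 1$, $b_{20} = b_0(0)$, and $a_{11} = b_{11} = b_{02} = 0$. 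This matches condition (b) of Theorem \ref{conditionsparabola}, confirming that $\Delta_p$ is a half-line, so Proposition \ref{formula} applies. Plugging these coefficients into \eqref{eq:kappaamonge}, using $\sqrt{a_{02}^2+b_{02}^2}=1$ and $a_{11}a_{02}+b_{11}b_{02}=0$, collapses the right-hand side to $\kappa_a(p) = a_{20}a_{02} = a_0(0)$.

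Next I would compute $\kappa_u(p)$ from the explicit parametrisation of the parabola given in Theorem \ref{conditionsparabola}: with our coefficients, $\eta(y) = (0,\ a_0(0)+y^2,\ b_0(0))$, a horizontal half-line lying at height $b_0(0)$ in the third ambient coordinate. In the adapted frame, $v_a$ points along the direction in which this half-line extends (the second ambient coordinate) while $\nu_2$ is orthogonal to it (the third ambient coordinate), so $|\langle \eta(y),\nu_2\rangle| = |b_0(0)|$, and fixing the orientation of $\nu_2$ gives $\kappa_u(p) = b_0(0)$. Substituting $\kappa_a = a_0(0)$ and $\kappa_u = b_0(0)$ into $\tilde K$ then yields the claimed identity.

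The proof is entirely a verification and presents no real obstacle. The only points needing a brief mention are the implicit assumption $k\ge 2$ (without which the term $a_1(0)\, uv$ would contribute $a_{11} = a_1(0)/2$ and shift $\kappa_a$ by $-a_1(0)^2/4$), consistent with the blow-up used in the Gaussian curvature computation above, and the sign convention relating $\kappa_u$ to $b_0(0)$ via the orientation of $\nu_2$; neither affects the boundedness statement that is the content of the proposition.
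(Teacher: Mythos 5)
Your proposal is correct and follows essentially the same route as the paper: the paper's ``proof'' consists precisely of the displayed computation of $K$ in the blow-up coordinates followed by the bare identifications $\kappa_a=a_0(0)$ and $\kappa_u=b_0(0)$, which you verify explicitly via Proposition \ref{formula} and the parametrisation of the curvature parabola from Theorem \ref{conditionsparabola}. Your added remarks on needing $k\ge 2$ (so that $a_{11}=0$) and on the sign convention for $\kappa_u$ make the identifications more careful than the paper's one-line assertion, but do not change the argument.
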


\begin{rem}
Koenderink type formulas relate the Gaussian curvature with the curvature of a section of the surface and the curvature of the apparent contour of a certain projection (\cite{koenderink}).
Let us set $\xi=(0,\cos\varphi,\sin\varphi)\in N_0M$ ($\sin\varphi\ne0$),
and set
$\pi_\xi(X)=X-(X\cdot\xi)\xi$, $\pi_\xi:\R^3\to\xi^\perp$.
A point $p$ is a singular point of $\pi_\xi\circ f$ if and
only if $\det(f_{u}(p),f_{v}(p),\xi)=0$.
We set $A(u,v)=\det(f_{u},f_{v},\xi)$. Then $A_{v}(0,0)=a_2(0,0)\sin\varphi\ne0$.
Thus there exists a function $v_1(u)$ such that
$A(u,v_1(u))=0$.
Then the contour of $f$ by $\pi_\xi$ is
$c(u)=\pi_\xi\circ f(u,v_1(u))$.
Since $A_{u}(0,0)=0$, $v_1'(0)=0$,
and $v_1''(0)=-A_{uu}(0,0)/A_{v}(0,0)=b_1(0)\cos\varphi/(a_2(0,0)\sin\varphi)$.
We have
$c(u)=u(1,0,0)+u^2(b_0(0) \cos\varphi-a_0(0)\sin\varphi)/2+O(u^3)$.
Thus the curvature of $c$ is
$$
-b_0(0) \cos\varphi+a_0(0) \sin\varphi+O(u).
$$
We set $\kappa_1=-b_0(0) \cos\varphi+a_0(0) \sin\varphi$.
Since
the axial curvature $\kappa_a$ is $a_0(0)$, and
the umbilic curvature $\kappa_u$ is $b_0(0)$,
we see
$\kappa_1=-\kappa_u \cos\varphi+\kappa_a \sin\varphi$,
and
$$
\tilde K=b_1(0)\Big(\kappa_a b_1(0) \cos\theta-
\dfrac{\kappa_a\sin\varphi-\kappa_1}{\cos\varphi} \sin\theta\Big).
$$
Thus, we can obtain a Koenderink type formula
if we can get $b_1(0)$ as a curvature of a slice
of $M$, however, this seems very difficult and we have not been able to do so.
\end{rem}

\subsection{Obstruction to being a frontal}

Here we consider the geometric meaning of $b_1(0)$.
Let $f=(f_1,f_2,f_3):\R^2\to\R^3$ be a germ, and $j^2f(0)\sim_{\mathcal{A}^{2}}(u,v^2,0)$.
Then we have a vector field $\eta$ such that
$\eta$ generates the kernel of $df$ on the singular set $S(f)$.
In this case, one can see $S(f)\subset \{v=0\}$ by a suitable coordinate change,
in particular, the regular set of $f$ is dense.
It is known that $f$ is a frontal if and only if the Jacobian ideal is principal
(generated by a single element) \cite[Lemma 2.3]{ishikawafrontal}.
Let $f$ be written in the form $(u,f_2(u,v),f_3(u,v))$.
Then we can choose $\eta=\partial_{v}$.
By the assumption $j^2f(0)\sim_{\mathcal{A}^{2}}(u,v^2,0)$ and $f_{v}=0$,
we have $f_{vv}\ne0$.
This means that one of $(f_2)_{v},(f_3)_{v}$ does not have a critical point
at $(0,0)$.
Let us assume that it is $(f_2)_{v}$, i.e.
$(f_2)_{vv}(0,0)\ne0$. Then there exists
a function $v(u)$ such that
$(f_2)_{v}(u,v(u))=0$ for all $u$.

\begin{prop}
The map $f$ is a frontal near $(0,0)$ if and only if
$(f_3)_{v}(u,v(u))=0$ for all $u$.
\end{prop}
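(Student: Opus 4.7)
The plan is to apply Ishikawa's criterion cited in the paragraph preceding the proposition: $f$ is a frontal near $(0,0)$ if and only if the Jacobian ideal $J$ is principal. Since $f=(u,f_2,f_3)$, the $2\times 2$ minors of $df$ are $(f_2)_v$, $(f_3)_v$ and $(f_2)_u(f_3)_v-(f_3)_u(f_2)_v$; the last lies in $\langle (f_2)_v,(f_3)_v\rangle$, so $J=\langle (f_2)_v,(f_3)_v\rangle$. The strategy is to normalize this ideal using the hypothesis $(f_2)_{vv}(0,0)\neq 0$, reduce to a very simple model, and then show by a direct geometric argument that the model is principal precisely when $A(u):=(f_3)_v(u,v(u))$ vanishes identically.

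First I would use the implicit function theorem: since $(f_2)_v(0,0)=0$ and $(f_2)_{vv}(0,0)\neq 0$, the smooth curve $v=v(u)$ with $(f_2)_v(u,v(u))=0$ exists, and one can factor $(f_2)_v(u,v)=(v-v(u))\,\phi(u,v)$ with $\phi(0,0)=(f_2)_{vv}(0,0)\neq 0$, so that $\phi$ is a unit in the local ring of smooth germs. Next, change coordinates to $(u,w)$ with $w=v-v(u)$. In these coordinates $(f_2)_v=w\tilde\phi$ with $\tilde\phi$ a unit, and Hadamard's lemma applied in the $w$-variable gives $(f_3)_v(u,w)=A(u)+w\,\tilde\psi(u,w)$ with $A(u)=(f_3)_v(u,v(u))$. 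Hence
\[
J=\langle w\tilde\phi,\ A(u)+w\tilde\psi\rangle=\langle w,\ A(u)\rangle.
\]

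The remaining content of the proposition is the equivalence ``$\langle w,A(u)\rangle$ is principal $\Longleftrightarrow A\equiv 0$''. The easy direction: if $A\equiv 0$ then $J=\langle w\rangle$ is principal, so $f$ is a frontal (and one can even write down $\nu$ explicitly as $(f_u\times f_v)/w$ divided by its norm). The hard direction, which I expect to be the main obstacle, is showing that $J$ cannot be principal when $A\not\equiv 0$. For this, write $A(u)=u^kB(u)$ with $B(0)\neq 0$ and $k\geq 1$, so $J=\langle w,u^k\rangle$. Suppose for contradiction that $J=\langle g\rangle$, so that $w=\alpha g$ and $u^k=\beta g$ for smooth $\alpha,\beta$. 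Differentiating $w=\alpha g$ in $w$ at the origin forces $\alpha(0,0)\,g_w(0,0)=1$, so $g$ is a submersion at $(0,0)$ and its zero set is a smooth curve. That curve is forced to coincide with $\{w=0\}$ because $w=\alpha g$ makes $\{g=0\}\subseteq\{w=0\}$ and both are smooth 1-dimensional germs. But $u^k=\beta g$ would then vanish identically on $\{w=0\}$, which contradicts the fact that $u$ is a free parameter on that line. This yields the contradiction and finishes the proof.

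Putting the pieces together, one concludes: $f$ is a frontal $\iff J$ is principal $\iff \langle w,A(u)\rangle$ is principal $\iff A\equiv 0$, which is the statement. The only technically delicate step is the last equivalence, and the zero-set argument above handles it cleanly in the smooth category by exploiting that $g$ must be a submersion.
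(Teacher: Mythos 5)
Your argument is correct in substance but follows a genuinely different route from the paper. The paper proves the proposition by successive coordinate changes in source and target that bring $f$ to a normal form $(u,v^2,vc_1(u,v^2))$ (up to the terms it absorbs), then computes $f_u\times f_v$ explicitly and reads off that frontality is equivalent to $c_1(u,0)=0$, which translates back to $(f_3)_v(u,v(u))=0$. You instead stay with Ishikawa's criterion in its ideal-theoretic form: you identify the Jacobian ideal as $\langle (f_2)_v,(f_3)_v\rangle$, use Hadamard's lemma and the coordinate $w=v-v(u)$ to reduce it to $\langle w, A(u)\rangle$ with $A(u)=(f_3)_v(u,v(u))$, and then settle principality by the zero-set argument (a generator must be a submersion, its zero locus must be $\{w=0\}$, and $A$ must vanish there). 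This buys a proof that never normalizes $f$ itself and isolates the genuine content in a clean algebraic equivalence; the paper's computation, by contrast, produces the explicit normal vector along the way. One step of yours is not valid as written in the smooth category: a nonzero germ $A$ need not factor as $u^kB(u)$ with $B(0)\neq 0$ (flat functions such as $e^{-1/u^2}$ have no finite order of vanishing), so the reduction of $\langle w,A(u)\rangle$ to $\langle w,u^k\rangle$ can fail. Fortunately this step is superfluous: your zero-set argument applies verbatim with $A$ in place of $u^k$, since $A=\beta g$ forces $A$ to vanish identically on $\{g=0\}=\{w=0\}$, i.e. $A\equiv 0$, which is the desired contradiction. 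You should also record the (immediate) fact that $g(0,0)=0$, which you need before differentiating $w=\alpha g$ to conclude that $g$ is a submersion; it holds because $A(0)=(f_3)_v(0,0)=0$ at the corank $1$ point, so $J\subseteq\mathfrak m$.
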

\begin{proof}
Being a frontal or not does not depend on the choice of
coordinate systems,
we can change
the coordinate systems on the source and the target.
We may change $(u,v)$ so that $(u,v(u))$ is the $u$-axis.
Then $f$ has the form
$f=(u,a(u)+v^2\tilde f_2(u,v),\tilde f_3(u,v))$,
(where $\tilde f_3(u,v)=f_3(u,v-v(u))$).
By a coordinate change on the target,
we may assume
$f=(u,v^2\tilde f_2(u,v),\tilde f_3(u,v))$.
We may change $(u,v)$ so that
$f$ has the form
$f=(u,v^2,\tilde f_3(u,v))$.
$\tilde f_3$ can be written by $\tilde f_3(u,v)=b(u)+vc_1(u,v^2)+v^2c_2(u,v^2)$.
By a coordinate change on the target,
we may assume
$f=(u,v^2\tilde f_2(u,v),vc_1(u,v^2))$.
Then $f_{u}\times f_{v}$ is
$(2v,-2v(c_1)_{v}+c_1,2v^2)$.
Thus $f$ is a frontal if and only if $c_1$ can be divided by $v$,
namely $c_1(u,0)=0$. This is equivalent to $(\tilde f_3)_{v}(u,0)=0$.
This is equivalent to $(f_3)_{v}(u,v(u))=0$ for all $u$.
\end{proof}

Taking $f$ written by \eqref{eq:formf}, we see that a necessary condition that $(f_3)_{v}(u,v(u))=0$
is $b_1(0)=0$.

\begin{coro}
Consider $f$ as in \eqref{eq:formf}, then $b_1(0)\neq 0$ implies that $f$ is not a frontal.
\end{coro}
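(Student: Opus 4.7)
The plan is to apply the preceding proposition, which reduces frontality of $f$ to the condition that $(f_3)_v(u,v(u))\equiv 0$ on a neighbourhood of $0$, where $v(u)$ is the smooth germ determined implicitly by $(f_2)_v(u,v(u))=0$ and $v(0)=0$. The implicit function theorem applies because, from the normal form \eqref{eq:formf}, $(f_2)_{vv}(0,0)=a_2(0,0)=1\neq 0$, so such a $v(u)$ exists and is unique. The task is thus reduced to showing that the hypothesis $b_1(0)\neq 0$ forces the power series $(f_3)_v(u,v(u))$ to be non-trivial.

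The first step is to expand $v(u)$. From the normal form,
\[
(f_2)_v(u,v)=\tfrac{u^{k}}{2}a_1(u)+v\,a_2(u,v)+\tfrac{v^{2}}{2}(a_2)_v(u,v),
\]
and matching Taylor coefficients in $(f_2)_v(u,v(u))=0$ yields $v(u)=-\tfrac{a_1(0)}{2}u^{k}+O(u^{k+1})$. In particular, $v(u)=O(u^{k})$ with $k\geq 1$.

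The main step is to substitute this into
\[
(f_3)_v(u,v)=\tfrac{u^{2}}{2}b_1(u)+uv\,b_3(u)+\tfrac{v^{2}}{2}b_4(u,v)+\tfrac{v^{3}}{6}(b_4)_v(u,v),
\]
and examine the lowest-order Taylor coefficient in $u$. The first summand contributes $\tfrac{b_1(0)}{2}u^{2}+O(u^{3})$; the remaining summands, after substituting $v(u)$, are of order $u^{k+1}$, $u^{2k}$, $u^{3k}$ respectively, and their coefficients involve only $a_1, b_3, b_4$ but never $b_1(0)$. Since the $u^{2}v$ monomial in $f_3$ is the unique source of $b_1(0)$ in the expansion of $(f_3)_v(u,v(u))$, no cancellation among the other terms can erase it. Hence $b_1(0)\neq 0$ forces $(f_3)_v(u,v(u))\not\equiv 0$, and the proposition concludes that $f$ is not a frontal.

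The subtlety to watch for is the case $k=1$, where the summands $uv\,b_3(u)$ and $\tfrac{v^{2}}{2}b_4(u,v)$ contribute at the same order $u^{2}$ as the leading term $\tfrac{u^{2}}{2}b_1(u)$. Even then, the $b_1(0)$-dependence of the $u^{2}$-coefficient cannot be cancelled by the $b_1$-independent contributions coming from $a_1(0), b_3(0), b_4(0,0)$ in a way consistent with $(f_3)_v(u,v(u))\equiv 0$, because $b_1$ also enters the higher-order $u^{3}, u^{4},\dots$ coefficients through $b_1'(0), b_1''(0),\dots$ and through further expansion of $v(u)$; tracking these book-keeping details is the only non-routine part of the argument.
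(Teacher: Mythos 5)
Your reduction to the preceding proposition and the expansion $v(u)=-\tfrac{a_1(0)}{2}u^{k}+O(u^{k+1})$ follow exactly the route the paper takes (its entire proof is the single sentence before the corollary, asserting that $b_1(0)=0$ is necessary for $(f_3)_v(u,v(u))\equiv0$), and for $k\ge 2$ your argument is complete and correct: all terms other than $\tfrac{u^2}{2}b_1(u)$ contribute at order $\ge 3$, so the $u^2$-coefficient of $(f_3)_v(u,v(u))$ is exactly $\tfrac{b_1(0)}{2}$ and $b_1(0)\neq0$ kills frontality.

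The problem is your treatment of $k=1$. The claim that the $b_1(0)$-contribution to the $u^2$-coefficient ``cannot be cancelled'' by the $a_1,b_3,b_4$ terms is false, and the appeal to the higher-order coefficients does not rescue it. Concretely, take $f(u,v)=\big(u,\,uv+\tfrac{v^2}{2},\,u^2v+\tfrac{uv^2}{2}\big)$, which is of the form \eqref{eq:formf} with $k=1$, $a_1\equiv 2$, $a_2\equiv1$, $b_1\equiv 2$, $b_3\equiv 1$, $b_4\equiv 0$. Here $(f_2)_v=u+v$ gives $v(u)=-u$, and $(f_3)_v=u^2+uv$ vanishes identically along $v=-u$; indeed $f_u\times f_v=(u+v)\big(-\tfrac{v(2u+v)}{2},-u,1\big)$, so $f$ admits a smooth unit normal and is a frontal, although $b_1(0)=2\neq0$. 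What goes wrong is that for $k=1$ the coefficient $b_1(0)$ is not invariant under the shears $v\mapsto v+cu$ still available inside the normal form: in the example, $v\mapsto v+u$ transforms $f$ into $\big(u,-\tfrac{u^2}{2}+\tfrac{v^2}{2},-\tfrac{u^3}{2}+\tfrac{uv^2}{2}\big)$, where $b_1\equiv0$. The corollary must therefore be read with $k\ge 2$ --- which is permitted, since \eqref{eq:formf} can be achieved ``for any $k\ge1$'' --- or, equivalently, after the preliminary shear $v\mapsto v-v(u)$ making $v(u)\equiv0$, in which case $(f_3)_v(u,0)=\tfrac{u^2}{2}b_1(u)$ on the nose and the conclusion is immediate. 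You should either impose $k\ge2$ from the outset or perform that shear; the book-keeping you propose for $k=1$ cannot be made to work.
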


We define the first obstruction of frontality $\kappa_f$ as
$\kappa_f=b_1(0)$.

\end{document}